\setlist[enumerate]{nosep}
\definecolor{labelkey}{rgb}{0,0.08,0.45}
\definecolor{refkey}{rgb}{0,0.6,0.0}
\definecolor{Brown}{rgb}{0.45,0.0,0.05}
\definecolor{lime}{rgb}{0.00,0.8,0.0}
\definecolor{lblue}{rgb}{0.5,0.5,0.99}
\colorlet{hlcyan}{cyan!30}
\def\namedlabel#1#2{\begingroup
   \def\@currentlabel{#2}%
   \label{#1}\endgroup
}
\newcommand{\seppthree}{\setlength{\itemsep}{-3pt}}
\newcommand{\To}{\ensuremath{\rightrightarrows}}
\newcommand{\fenv}[1]%
{\ensuremath{\,\overrightarrow{\operatorname{env}}_{#1}}}
\newcommand{\benv}[1]%
{\ensuremath{\,\overleftarrow{\operatorname{env}}_{#1}}}
\newcommand{\scal}[2]{\left\langle{#1},{#2}  \right\rangle}
\newcommand{\RR}{\ensuremath{\mathbb R}}
\newcommand{\Id}{\ensuremath{\operatorname{Id}}}
\crefname{equation}{}{equations}
\crefname{chapter}{Appendix}{chapters}
\crefname{item}{}{items}
\crefname{enumi}{}{}
\newtheorem{theorem}{Theorem}[section]
\newtheorem{lemma}[theorem]{Lemma}
\newtheorem{proposition}[theorem]{Proposition}
\newtheorem{example}[theorem]{Example}
\newtheorem{fact}[theorem]{Fact}
\providecommand{\RR}{\mathbb{R}}
\providecommand{\Id}{\operatorname{{ Id}}}
\providecommand{\To}{\rightrightarrows}
\providecommand{\Id}{\operatorname{Id}}
\providecommand{\PR}{\operatorname{P}}
\providecommand{\RR}{\mathbb{R}}
\definecolor{myblue}{rgb}{.8, .8, 1}
\begin{document}

%

\title{\textsc{More on Arag\'on Artacho–Campoy algorithm Operators}}
\author{
Salihah Thabet Alwadani\thanks{
Mathematics, Yanbu Industrial College, The Royal Comission for Jubail and Yanbu, 30436,
 Saudi Arabia. E-mail:
\texttt{salihah.s.alwadani@gmail.com}.}
}

\date{March 7, 2024 }
\maketitle

\vskip 8mm

\begin{abstract} 
The Arag\'on Artacho–Campoy algorithm (AACA) is a new method for finding zeros of sums of monotone operators. In this paper we complete the analysis of their algorithm by defining their operator using Douglas Rachford operator and then study the effects of the order of the two possible Arag\'on Artacho–Campoy operators.
\end{abstract}

{\small
\noindent
{\bfseries 2020 Mathematics Subject Classification:}
{Primary 47H09, 47H05; Secondary 47A06, 90C25
}

\noindent {\bfseries Keywords:}
Maximally monotone operator, Arag\'on Artacho–Campoy operators, 
Affine subspace, Douglas-Rachford splitting operator,
projection operator,
 resolvent, 
 reflected resolvent .
}

\maketitle

\section{Introduction}

Throughout, we assume that 
\begin{equation}
\text{$X$ is
a real Hilbert space with inner product
$\scal{\cdot}{\cdot}\colon X\times X\to\RR$, }
\end{equation}
and induced norm $\|\cdot\|\colon X\to\RR\colon x\mapsto \sqrt{\scal{x}{x}}$. We also assume that $A: X \To X$ and $B: X \To X$ are maximally monotone operators. For more details about maximally monotone operators we refre the reader to \cite{auslender2000asymptotic}, \cite{BC2017},  \cite{aborwein2010fifty}, \cite{borwein2010convex}, \cite{borwein2004techniques}, \cite{burachik2002maximal}, \cite{combettes2018monotone}, \cite{eckstein1989splitting}, and the references therein. In \cite{auslender2000asymptotic},  Auslender and Teboulle provide essential tools used to study monotone graph. They focuse on the behavior of a given subset of $\RR^{n}$ at infinity. By using real analysis and geometric concepts, they come up with mathematical treatment to study the asymptotic behaviour of sets. Moreover, Heinz and Patrick book \cite{BC2017} is one of the best source to learn about non-linear analysis, namely, convex analysis, monotone operators, and fixed point theory of operators.  Additionally, \cite{aborwein2010fifty} shows the importance of the maximal monotone operators and describes the progres which has been made in the world of monotone operators during the past decade. Furthermore, \cite{borwein2010convex} gives a survey that shows the developments on the theory behind the monotone operators. It is well known that a promintet example of maximal monotone operators is the subdifferential operators, which were investigated in section 5.1.6 in \cite{borwein2004techniques} . \\

What is more Burachik and Svaiter come up with new connections between maximal monotone operators and convex functions.  They show that there is a family of convex functions associated to each maximal monotone operator.  They study the family of convex functions, and determine its extremal elements by using the idae of family of convex functions, see \cite{burachik2002maximal}. After that, Patrick in \cite{combettes2018monotone} reviews the properities of subdifferentals operators as maximally montone operators and studies those of proximity operators as resolvants of maximally monotone operators.  In \cite{eckstein1989splitting} a cohesive treatment of monotone setvalued operators using mathematical programming is studied with details.

The \emph{resolvent} and the \emph{reflected resolvent} associated with $A$ are
\begin{equation}\label{quli1}
J_{A} = (\Id + A)^{-1} \ \ \text{and} \ \ R_{A} = 2 J_{A} - \Id,
\end{equation}
 respectively. Suppose that
\begin{equation}
A \ \text{and} \ B \ \text{are maximally monotone on} \ X, \ w \in X, \ \text{and} \ \gamma \in  \left]0,1 \right[.
\end{equation}
\begin{fact}\normalfont
The resolvent averages between $A$, $B$ and $N_{w}$ are 
\begin{equation}
A_{\gamma}: \mathcal{H} \To \mathcal{H}: x \mapsto A \Big( \gamma^{-1} \big( x - (1 - \gamma ) w \big) \Big)+  \gamma^{-1} \big( 1 - \gamma \big) \big( x - w\big),
\end{equation}
and 
\begin{equation}
B_{\gamma}: \mathcal{H} \To \mathcal{H}: x \mapsto B \Big( \gamma^{-1} \big( x - (1 - \gamma ) w \big) \Big)+  \gamma^{-1} \big( 1 - \gamma \big) \big( x - w\big).
\end{equation}	
\end{fact}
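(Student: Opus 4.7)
My plan is to verify that the pointwise formulas given for $A_\gamma$ and $B_\gamma$ are indeed the resolvent averages of $A$ with $N_w$ and of $B$ with $N_w$ at weight $\gamma$. Here $N_w$ denotes the normal cone of the singleton $\{w\}$, whose resolvent is the constant map $x\mapsto w$. By definition, the resolvent average at weight $\gamma\in\,\zeroun$ of two maximally monotone operators $T_1$ and $T_2$ is the unique maximally monotone operator $T_\gamma$ whose resolvent satisfies $J_{T_\gamma}=\gamma J_{T_1}+(1-\gamma)J_{T_2}$. Hence the task reduces to establishing
\[
J_{A_\gamma}(x)=\gamma J_A(x)+(1-\gamma)w\quad\text{and}\quad J_{B_\gamma}(x)=\gamma J_B(x)+(1-\gamma)w
\]
for every $x\in X$.

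I would verify the first identity by direct computation; the second is identical with $B$ in place of $A$. Fixing $x\in X$ and letting $z\in(\Id+A_\gamma)^{-1}(x)$, the defining inclusion reads
\[
x\in z+A\bigl(\gamma^{-1}(z-(1-\gamma)w)\bigr)+\gamma^{-1}(1-\gamma)(z-w).
\]
The natural substitution is $y:=\gamma^{-1}(z-(1-\gamma)w)$, equivalently $z=\gamma y+(1-\gamma)w$. One then observes $z-w=\gamma(y-w)$, so the affine correction $\gamma^{-1}(1-\gamma)(z-w)$ simplifies to $(1-\gamma)(y-w)$. Substituting back, the $w$-terms and the two contributions $\gamma y$ and $(1-\gamma)y$ combine cleanly, and the inclusion collapses to $x\in y+A(y)$. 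Hence $y=J_A(x)$ and $z=\gamma J_A(x)+(1-\gamma)w$, which is the claimed identity; running the computation in reverse confirms that this value of $z$ does lie in $(\Id+A_\gamma)^{-1}(x)$.

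The work is essentially the algebraic bookkeeping of this change of variables, so I do not anticipate a genuine obstacle. The decisive observation is the identity $z-w=\gamma(y-w)$, which absorbs the factor $\gamma^{-1}$ appearing in the affine correction and causes the inclusion to reduce to $x\in y+A(y)$. The only loose end to tie up is to note that $A_\gamma$ is itself maximally monotone, being a resolvent average of two maximally monotone operators, which justifies the single-valuedness of $J_{A_\gamma}$ and hence the functional identity used above.
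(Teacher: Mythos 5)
The paper states this as a Fact without proof (it is essentially the definition of the resolvent average of $A$, respectively $B$, with the normal cone operator $N_{\{w\}}$, imported from the cited literature), so there is no proof to compare against; your verification is the natural one and is correct. The change of variables $z=\gamma y+(1-\gamma)w$ does make the affine correction collapse via $z-w=\gamma(y-w)$, reducing the inclusion to $x\in y+A(y)$ and yielding $J_{A_\gamma}=\gamma J_A+(1-\gamma)w$, which is exactly the resolvent formula \eqref{eq:ros} the paper records in the subsequent Fact. The only cosmetic remark is that you do not need to invoke maximal monotonicity of $A_\gamma$ a priori: your computation exhibits $(\Id+A_\gamma)^{-1}$ as the everywhere-defined, single-valued, firmly nonexpansive map $\gamma J_A+(1-\gamma)w$, so maximal monotonicity of $A_\gamma$ follows from Minty's theorem rather than being needed as an input.
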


\begin{fact}\normalfont
$A_{\gamma}$ and $B_{\gamma}$ are maximally monotone and their resolvents are given by
\begin{equation}\label{eq:ros}
J_{A_\gamma } = \gamma J_{A} + \big( 1 - \gamma \big) w \ \ \text{and} \ \ J_{B_\gamma } = \gamma J_{B} + \big( 1 - \gamma \big) w,
\end{equation}
respectively. Moreover, reflected resolvents are 
\begin{equation}\label{eq:ref}
R_{A_\gamma} = 2 \gamma J_{A} + 2 \big( 1 - \gamma \big) w - \Id, \ \ \text{and} \ \ R_{B_\gamma} = 2 \gamma J_{B} + 2 \big( 1 - \gamma \big) w - \Id,
\end{equation}
 respectively. Then Arag\'on Artacho–Campoy operator \cite{alwadani2018asymptotic} associated with the ordered pair of operators $\big( A_\gamma, B_\gamma \big)$ is
 \begin{equation}\label{twithouttide}
 T_{A_\gamma, B_\gamma } = \big( 1 - \lambda \big) \Id + \lambda R_{B_\gamma} R_{A_\gamma}.
 \end{equation}
\end{fact}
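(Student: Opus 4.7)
The plan is to compute $J_{A_\gamma}$ directly from its defining inclusion and then read off the other assertions. Given any $x \in X$, I would solve $x \in y + A_\gamma(y)$, namely
\begin{equation*}
x \in y + A\bigl(\gamma^{-1}(y-(1-\gamma)w)\bigr) + \gamma^{-1}(1-\gamma)(y-w).
\end{equation*}
The natural substitution is $z := \gamma^{-1}(y-(1-\gamma)w)$, equivalently $y = \gamma z + (1-\gamma)w$. Plugging this in, the $w$-terms and the $z$-terms outside of $A$ should collapse to give the equivalent inclusion $x \in z + A(z)$, so $z = J_A(x)$ and therefore $y = \gamma J_A(x) + (1-\gamma)w$. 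This establishes the resolvent formula for $A_\gamma$ in \eqref{eq:ros}, and the identical argument applied to $B$ gives the companion formula.

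Once the resolvent formula is in hand, maximal monotonicity of $A_\gamma$ follows via Minty's theorem: the calculation just performed shows that $\Id + A_\gamma$ is single-valued and onto, both inherited from the corresponding properties of $J_A$ under maximal monotonicity of $A$. Monotonicity of $A_\gamma$ itself is a direct check. Writing $\psi_\gamma(x):=\gamma^{-1}(x-(1-\gamma)w)$ and picking $u \in A_\gamma(x)$, $v \in A_\gamma(y)$ of the form $u = a + \gamma^{-1}(1-\gamma)(x-w)$ with $a \in A(\psi_\gamma(x))$ (and analogously for $v$), one obtains
\begin{equation*}
\scal{u-v}{x-y} = \gamma\scal{a-b}{\psi_\gamma(x)-\psi_\gamma(y)} + \gamma^{-1}(1-\gamma)\|x-y\|^2 \geq 0,
\end{equation*}
where the first term is nonnegative by monotonicity of $A$ and the second by $\gamma \in \zeroun$. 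Combined with Minty, this yields maximal monotonicity, and the identical argument works for $B_\gamma$.

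The reflected-resolvent formulae in \eqref{eq:ref} then follow at once from the definition $R_{A_\gamma} = 2 J_{A_\gamma} - \Id$ applied to the expression just established:
\begin{equation*}
R_{A_\gamma} = 2\bigl(\gamma J_A + (1-\gamma)w\bigr) - \Id = 2\gamma J_A + 2(1-\gamma)w - \Id,
\end{equation*}
and the same computation gives $R_{B_\gamma}$. The only genuinely substantive step is the algebraic cancellation in the very first paragraph; once one verifies that the $(1-\gamma)$-cross-terms collapse cleanly under the affine change of variables, every other assertion is a one-line consequence, so I do not anticipate any real obstacle.
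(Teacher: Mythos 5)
Your proof is correct. The paper states this result as a Fact and offers no proof of its own, so your derivation fills in a computation the paper leaves implicit: the change of variables $z=\gamma^{-1}\big(y-(1-\gamma)w\big)$ does make the terms outside $A$ collapse, since $y+\gamma^{-1}(1-\gamma)(y-w)=\gamma z+(1-\gamma)w+(1-\gamma)(z-w)=z$, which yields $x\in z+A(z)$ and hence $J_{A_\gamma}x=\gamma J_A x+(1-\gamma)w$; the monotonicity estimate and the appeal to Minty's theorem are likewise sound, and the reflected-resolvent formulae are immediate. The only cosmetic remark is that the final display \eqref{twithouttide} is simply the definition of the Arag\'on Artacho--Campoy operator applied to the pair $(A_\gamma,B_\gamma)$, so there is nothing to verify there.
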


\begin{fact}\label{fact:1}\normalfont The Douglas–Rachford splitting operator \cite{lions1979splitting} associated with the ordered pair of operators $\big( A, B\big)$ is
\begin{equation}\label{toon}
T_{A,B} = \frac{1}{2} \big( \Id + R_{B} R_{A}\big) = \Id - J_{A} + J_{B} R_{A}.
\end{equation}	
\end{fact}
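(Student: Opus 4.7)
The statement has two parts: the equality $T_{A,B} = \tfrac{1}{2}(\Id + R_B R_A)$ is the definition of the Douglas--Rachford operator taken from \cite{lions1979splitting}, so only the second equality $\tfrac{1}{2}(\Id + R_B R_A) = \Id - J_A + J_B R_A$ requires verification. The plan is to reduce this to a direct algebraic manipulation using only the definitions of the reflected resolvents recorded in \eqref{quli1}.

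First I would expand the composition $R_B R_A$ by using the identity $R_B = 2 J_B - \Id$ from \eqref{quli1}, which gives
\begin{equation*}
R_B R_A = 2 J_B R_A - R_A.
\end{equation*}
Then I would substitute $R_A = 2 J_A - \Id$ on the right-hand side, obtaining
\begin{equation*}
R_B R_A = 2 J_B R_A - 2 J_A + \Id.
\end{equation*}

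Adding $\Id$ to both sides and multiplying by $\tfrac{1}{2}$ yields
\begin{equation*}
\tfrac{1}{2}(\Id + R_B R_A) = \tfrac{1}{2}\bigl(2\Id - 2 J_A + 2 J_B R_A \bigr) = \Id - J_A + J_B R_A,
\end{equation*}
which is precisely the claimed equality. No additional monotonicity or fixed-point theory is needed; the verification is purely algebraic.

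There is no real obstacle here: the only point that requires a little care is keeping the operators composed in the correct order (the reflected resolvents do not commute in general, so one must preserve $J_B R_A$ rather than inadvertently swap to $R_A J_B$). Beyond this bookkeeping, the proof is a one-line substitution.
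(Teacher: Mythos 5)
Your verification is correct and matches the paper's approach: the paper states this as a Fact without a written proof, but the identical expansion $R_B R_A = (2J_B-\Id)R_A = 2J_BR_A - (2J_A - \Id)$ is exactly what the paper carries out (for the $\gamma$-perturbed operators) at the start of the proof of \cref{Lem1}. Nothing further is needed.
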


 By doing simple calculations, we can find that 
 \begin{equation}\label{cccs}
 T_{B, A} = \Id + J_{A} R_{B} - J_{B}.
 \end{equation}
 In this paper, We  study the connection between the Arag\'on Artacho–Campoy operators $T_{A_\gamma, B_\gamma}$ and $T_{B_\gamma, A_\gamma}$. We summarize the main results in this paper as follows:
 \begin{itemize}
 \item Important properities about $J_{A_\gamma}$ and $A_\gamma$ are given in \cref{Prop11}. These properities will be useful for the study.
 \item We give formulas for the  Arag\'on Artacho–Campoy operators using the  Douglas–Rachford splitting operator ( see \cref{Lem1} ). For more information about Douglas–Rachford splitting Algorithm see \cite{fu2020anderson}, \cite{bauschke2016range}, \cite{bauschke2023douglas}, \cite{censor2016new}, \cite{eckstein1992douglas}, and \cite{lindstrom2021survey}. \\  \cite{bauschke2016range}, \cite{bauschke2023douglas},  \cite{censor2016new}, and \cite{lindstrom2021survey} help to undersytand more about the behaviour of DRS. Paper \cite{bauschke2016range} studies the range of the DRS systematically. Under the assumption that the operators are $3^{*}$ monotone operators. While the second one helps to understand the behavior of the shadow sequence when the given functions have disjoint domains. The main result of this paper is proving the weak and value convergence of the shadow sequence generated by the Douglas–Rachford algorithm. Paper \cite{censor2016new} aims to solve convex feasibilty problems by using new new algorithmic structures with DRS operators. Paper \cite{lindstrom2021survey} gives comprehensive survey about the developments of the DRS methods. Additionally, \cite{fu2020anderson} shows an amazing connection between the alternating direction multiplier method (ADMM) and Douglas Rachford Splitting method (DRS) for convex problem. Finally, paper \cite{eckstein1992douglas} shows that the proximal point algorithm encompasses the DRS method as a specific instance, which is employed for locsting a zero of the combined sum of two monotone operators. 
 \item With the assumption $A$ is affine relation, we prove that 
 $   R_{A_\gamma} T^{n}_{A_\gamma, B_\gamma } = T^{n}_{B_\gamma, A_\gamma } R_{A_\gamma} $ (see \cref{Lem:rs3}).
 \item We illustrate the results by giving two examples (see \cref{rea1} and \cref{EEXA1}).
 \item  We proved that the equality will not hold by replacing $A_\gamma$ by $B_\gamma$ in the privous result ( see \cref{EEXA1} \ref{Ex.17}, \ref{Ex.18}, and \ref{Ex.19} ).
 \end{itemize}  
 The notation used in the paper is standard and follows largely, e.g., \cite{alwadani2021behaviour}, \cite{alwadani2018asymptotic}, and \cite{BC2017}.

\section{New Results}
\begin{proposition}\label{Prop11}\normalfont Let $\gamma \in \left] 0, 1 \right[$ and suppose that $A$ is an affine relation. Then the following hold;
\begin{enumerate}
\item\label{re:1} $J_{A_\gamma}$ is affine.
\item\label{re:2} $A_\gamma$ is an affine relation.	
\end{enumerate}	
\end{proposition}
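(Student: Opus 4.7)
The plan is to exploit the explicit resolvent formula \cref{eq:ros}, namely $J_{A_\gamma} = \gamma J_A + (1-\gamma)w$, and the fact that affineness of a relation passes cleanly through the graph-theoretic operations $\Id+A$, $(\cdot)^{-1}$, scalar multiplication, and addition of a constant. I would handle \cref{re:1} first, and then bootstrap \cref{re:2} from it via the identity $A_\gamma = J_{A_\gamma}^{-1} - \Id$.

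For \cref{re:1}, my first step is to argue that $J_A$ itself is affine. Since $A$ is an affine relation, $\gra A$ is an affine subspace of $X\times X$; then $\gra(\Id + A)$ is the image of $\gra A$ under the invertible linear map $(x,y)\mapsto(x,x+y)$, so it is affine, and inverting a relation (flipping coordinates) still preserves affineness. Maximal monotonicity of $A$ then ensures $J_A$ is a single-valued operator with full domain, so it is an affine map in the usual sense. Plugging into \cref{eq:ros}, $J_{A_\gamma} = \gamma J_A + (1-\gamma) w$ is affine as a scalar multiple of an affine operator plus a constant vector.

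For \cref{re:2}, I would use $A_\gamma = J_{A_\gamma}^{-1} - \Id$, which follows directly from $J_{A_\gamma} = (\Id + A_\gamma)^{-1}$. By \cref{re:1} the graph of $J_{A_\gamma}$ is affine, hence so is the graph of $J_{A_\gamma}^{-1}$. Subtracting $\Id$ amounts to applying the linear map $(u,x)\mapsto(u,x-u)$ to the graph, which preserves affine subspaces, so $\gra A_\gamma$ is affine.

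The main obstacle is the first step: carefully justifying that the resolvent of an affine maximally monotone relation is an affine single-valued operator. All other steps are routine graph manipulations, but one must keep track of what ``affine'' means at the level of a (possibly multivalued) relation versus a single-valued operator, and invoke maximal monotonicity at the right moment to collapse the relation $J_A$ into an honest affine operator.
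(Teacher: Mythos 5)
Your proof is correct and follows essentially the same route as the paper: part (i) via the formula $J_{A_\gamma}=\gamma J_A+(1-\gamma)w$ once $J_A$ is known to be affine, and part (ii) by transporting affineness of the graph back through $\big(\Id+A_\gamma\big)^{-1}$ to $A_\gamma$. The only difference is that you prove the affineness of $J_A$ directly by graph manipulations (correctly invoking maximal monotonicity to make $J_A$ single-valued and everywhere defined), whereas the paper simply cites this fact from the literature.
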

\begin{proof}
\ref{re:1}: It follows from \cite[Lemma 2.3]{bauschke2016order} or \cite[Theorem  2.1(xix)]{bauschke2012firmly} that $J_{A}$ is affine. By using \eqref{eq:ros}, we obtain $J_{A_\gamma}$ is affine. Then, we have 
$J_{A_\gamma}$ is affine. \\
\ref{re:2}: From \ref{re:1}
 $J_{A_\gamma}$ is affine $\Leftrightarrow \big( \Id + A_\gamma\big)^{-1} $ is an affine relation $\Leftrightarrow \big( \Id + A_\gamma \big)$ is an affine relation $\Leftrightarrow A_\gamma$ is an affine relation.
\end{proof}
\begin{lemma}\label{Lem1}\normalfont Let $\gamma \in \left] 0, 1 \right[$, $\lambda \in \left] 0, 1 \right]$. We obtain 
\begin{align}
R_{B_\gamma} R_{A_\gamma} & = \Id + 2 J_{B_\gamma} R_{A_\gamma} - 2  J_{A_\gamma}\label{eq:m1}\\
& = \Id + 2 \gamma J_{B} R_{A_\gamma} - 2 \gamma J_{A}\label{eq:m2} \\
& = T_{A,B} + (1 - 2 \gamma) J_{A} - J_{B} R_{A}+ 2 \gamma J_{B} R_{A_\gamma}.\label{eq:m3}
\end{align}	
and 
\begin{align}
R_{A_\gamma} R_{B_\gamma} & = \Id + 2 J_{A_\gamma} R_{B_\gamma} - 2  J_{B_\gamma }\\
& = \Id + 2 \gamma J_{A} R_{B_\gamma} - 2 \gamma J_{B} \\
& = T_{B,A} + (1 - 2 \gamma) J_{B} - J_{A} R_{B} + 2 \gamma J_{A} R_{B_\gamma}.
\end{align}	
Moreover, 
\begin{align}
T_{A_\gamma, B_\gamma} & = \Id + 2 \lambda J_{B_\gamma} R_{A_\gamma} - 2 \lambda J_{A_\gamma}\label{eq:m4} \\
& = \Id + 2 \lambda \gamma J_{B} R_{A_\gamma} - 2 \lambda \gamma J_{A}\label{eq:m5}\\
& = \Id + 2 \lambda \gamma \big( J_{B} R_{A_\gamma} -  J_{A} \big) \label{eq:m566}\\
& =T_{A,B} + (1 - 2 \lambda \gamma ) J_{A} - J_{B} R_{A} + 2 \lambda \gamma J_{B} R_{A_\gamma} \label{eq:m5676} \\
& = T_{B, A} + J_{B} - J_{A} R_{B} + 2 \lambda \gamma J_{B} R_{A_\gamma } - 2 \lambda \gamma J_{A}.\label{eq:m567s6}
\end{align}
and 
\begin{align}
T_{B_\gamma, A_\gamma} & = \Id + 2 \lambda J_{A_\gamma} R_{B_\gamma} - 2 \lambda J_{B_\gamma} \\
& = \Id + 2 \lambda \gamma J_{A} R_{B_\gamma} - 2 \lambda \gamma J_{B}\\
& = \Id + 2 \lambda \gamma \big( J_{A} R_{B_\gamma} - J_{B} \big) \\
& = T_{B,A} + (1 - 2 \lambda \gamma ) J_{B} - J_{A} R_{B} + 2 \lambda \gamma J_{A} R_{B_\gamma}.
\end{align}
\end{lemma}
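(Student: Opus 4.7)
The entire lemma is an exercise in substitution using three ingredients already on the table: the definition $R_C = 2J_C - \Id$ applied to $C \in \{A_\gamma, B_\gamma\}$, the resolvent averaging identities \eqref{eq:ros}, and the two Douglas--Rachford formulas \eqref{toon} and \eqref{cccs}. The plan is to establish the block for $R_{B_\gamma} R_{A_\gamma}$ first, then obtain the block for $T_{A_\gamma, B_\gamma}$ by applying the convex-combination definition \eqref{twithouttide}; the symmetric blocks for $R_{A_\gamma} R_{B_\gamma}$ and $T_{B_\gamma, A_\gamma}$ follow by swapping $A \leftrightarrow B$ throughout.

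First, to derive \eqref{eq:m1}, I would write $R_{B_\gamma} R_{A_\gamma} = (2J_{B_\gamma} - \Id) R_{A_\gamma} = 2 J_{B_\gamma} R_{A_\gamma} - (2 J_{A_\gamma} - \Id)$, which immediately rearranges to $\Id + 2 J_{B_\gamma} R_{A_\gamma} - 2 J_{A_\gamma}$. Then, for \eqref{eq:m2}, substitute $J_{A_\gamma} = \gamma J_A + (1-\gamma)w$ and $J_{B_\gamma} = \gamma J_B + (1-\gamma)w$ from \eqref{eq:ros}; the two constant $(1-\gamma)w$ terms cancel against each other, leaving $\Id + 2\gamma J_B R_{A_\gamma} - 2\gamma J_A$. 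For \eqref{eq:m3}, invoke Fact~\ref{fact:1} in the form $T_{A,B} = \Id - J_A + J_B R_A$, add and subtract the quantities $J_A$ and $J_B R_A$ on the right-hand side of \eqref{eq:m2}, and collect terms, which yields exactly $T_{A,B} + (1 - 2\gamma) J_A - J_B R_A + 2\gamma J_B R_{A_\gamma}$.

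Next, to obtain \eqref{eq:m4}--\eqref{eq:m566}, insert \eqref{eq:m1} into the definition $T_{A_\gamma, B_\gamma} = (1-\lambda)\Id + \lambda R_{B_\gamma} R_{A_\gamma}$ from \eqref{twithouttide}: the $(1-\lambda)\Id$ and $\lambda\Id$ combine to $\Id$, and the remaining terms scale by $\lambda$, giving $\Id + 2\lambda J_{B_\gamma} R_{A_\gamma} - 2\lambda J_{A_\gamma}$. Substituting \eqref{eq:ros} once more produces \eqref{eq:m5}, and factoring $2\lambda\gamma$ produces \eqref{eq:m566}. The reformulation \eqref{eq:m5676} in terms of $T_{A,B}$ is the same add-and-subtract trick as for \eqref{eq:m3}, now with factor $2\lambda\gamma$ in place of $2\gamma$. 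Finally, \eqref{eq:m567s6} rests on \eqref{cccs}, $T_{B,A} = \Id + J_A R_B - J_B$: starting from \eqref{eq:m5} one writes $\Id = T_{B,A} - J_A R_B + J_B$, substitutes, and collects terms.

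The four displays for $R_{A_\gamma} R_{B_\gamma}$ and $T_{B_\gamma, A_\gamma}$ are proved by the identical chain of steps with the roles of $A$ and $B$ interchanged; in particular \eqref{toon} and \eqref{cccs} swap their roles when $A \leftrightarrow B$. There is no substantive obstacle here: the only risk is bookkeeping slips in the sign/coefficient manipulations, so I would organize the proof as one pair of displayed chains (for the reflected-resolvent composition and then for the $T$-operator) with a one-line remark that the dual pair follows by symmetry, rather than writing eight separate verifications.
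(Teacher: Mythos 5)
Your proposal is correct and follows essentially the same route as the paper's proof: expand $R_{B_\gamma}=2J_{B_\gamma}-\Id$, substitute the averaged resolvents \eqref{eq:ros} so the $(1-\gamma)w$ terms cancel, rewrite $\Id$ via \eqref{toon} and \eqref{cccs} to bring in $T_{A,B}$ and $T_{B,A}$, and pass to $T_{A_\gamma,B_\gamma}$ through \eqref{twithouttide}, with the remaining block obtained by swapping $A\leftrightarrow B$. No gaps.
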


\begin{proof}
From \eqref{eq:ref}, we obtain 
\begin{align*}
R_{B_\gamma} R_{A_\gamma } & = \big( 2 J_{B_\gamma} - \Id \big) R_{A_\gamma } \\
& = 2 J_{B_\gamma} R_{A_\gamma} - R_{A_\gamma} \\
& = 2 J_{B_\gamma} R_{A_\gamma} - \big( 2 J_{A_\gamma} - \Id\big) \\
& = \Id +  2 J_{B_\gamma} R_{A_\gamma} - 2 J_{A_\gamma},
\end{align*}
which prove \eqref{eq:m1}. From \eqref{eq:ros} and  \eqref{eq:m1} we have 
\begin{align*}
R_{B_\gamma} R_{A_\gamma } & = \Id + 2 \big( \gamma J_{B} + (1 - \gamma) w \big) R_{A_\gamma} - 2 \big( \gamma J_{A} + (1 - \gamma ) w \big) \\
& = \Id + 2 \gamma J_{B} R_{A_\gamma} + 2 ( 1 - \gamma ) w - 2 \gamma J_{A} - 2 (1 - \gamma ) w \\
& = \Id + 2 \gamma J_{B} R_{A_\gamma} - 2 \gamma J_{A},
\end{align*}
which verifies \eqref{eq:m2}. Next, from Fact~\ref{fact:1} we have 
\begin{align*} 
R_{B_\gamma} R_{A_\gamma} & = T_{A,B} + J_{A} - J_{B} R_{A} + 2 \gamma J_{B} R_{A_\gamma} - 2 \gamma J_{A} \\
& = T_{A,B} + (1 - 2 \gamma ) J_{A} - J_{B} R_{A} + 2 \gamma J_{B} R_{A_\gamma}.
\end{align*}
This verifies \eqref{eq:m3}. The proof of $R_{A_\gamma}R_{B_\gamma}$ is similar to $R_{B_\gamma}R_{A_\gamma}$. From \eqref{twithouttide}, we have 
\begin{align*}
T_{A_\gamma, B_\gamma} & = ( 1 - \lambda) \Id + \lambda R_{B_\gamma }R_{A_\gamma } \\
& = ( 1 - \lambda) \Id + \lambda \big(  \Id + 2 J_{B_\gamma} R_{A_\gamma} - 2  J_{A_\gamma} \big)  \ \ (\text{from} \eqref{eq:m1})\\
& = \Id + 2 \lambda J_{B_\gamma} R_{A_\gamma} - 2 \lambda J_{A_\gamma},
\end{align*}
which verifies \eqref{eq:m4}. By using \eqref{eq:ros} and \eqref{eq:m4}, we obtain
\begin{align*} 
T_{A_\gamma, B_\gamma} &  = \Id + 2 \lambda \big( \gamma J_{B} + (1 - \gamma ) w\big) R_{A_\gamma} - 2 \lambda \big( \gamma J_{A} + (1 - \gamma ) w \big) \\
& = \Id + 2 \lambda \gamma J_{B} R_{A_\gamma } + 2 \lambda (1 - \gamma ) w - 2 \lambda \gamma J_{A} - 2 \lambda (1 - \gamma ) w \\
& =  \Id + 2 \lambda \gamma J_{B} R_{A_\gamma} - 2 \lambda \gamma J_{A}. \\
& = \Id + 2 \lambda \gamma \big( J_{B} R_{A_\gamma} - J_{A}\big)
\end{align*}
This verifies \cref{eq:m5} and \cref{eq:m566}. Finally from \cref{eq:m5} and \cref{toon}, we obtain 
\begin{align*}
T_{A_\gamma, B_\gamma} &  =  T_{A,B} + J_{A}- J_{B} R_{A} + 2 \lambda \gamma J_{B} R_{A_\gamma } - 2 \lambda \gamma J_{A} \\
& =  T_{A,B} + (1 - 2 \lambda \gamma ) J_{A} - J_{B} R_{A} + 2 \lambda \gamma J_{B} R_{A_\gamma}.
\end{align*}
Combining \cref{cccs} and \cref{eq:m5} to get \cref{eq:m567s6}.
The proof of $T_{B_\gamma, A_\gamma}$ is similar to $T_{A_\gamma, B_\gamma}$.
\end{proof}

\begin{example}\normalfont\label{rea1} Let $w \in \mathcal{H}$, $U$ be a closed linear subspace, $\gamma \in \left] 0, 1 \right[$, and $\lambda \in \left] 0, 1 \right]$. Suppose that $A= \Id -v$, where $v \in U^\perp$ and $B= \PR_{ a+U}$, where $a \in \mathcal{H}$. Recall from \eqref{toon} that  $$T_{A,B} = \Id - J_{A}+ J_{B} R_{A},$$  and from \eqref{twithouttide}  $$T_{(A_\gamma, B_\gamma )} = ( 1 - \lambda ) \Id + \lambda \big( R_{B_\gamma} R_{A_\gamma }\big).$$
Then the following hold:
\begin{enumerate} 
\item\label{eqq:3} $J_{A} = \big(( \Id + v)/2\big) $ and $R_{A} = v$. 
\item\label{eqq:4} $J_{A_\gamma} =  \gamma \big(( \Id + v) / 2\big)+ ( 1 - \gamma ) w$. Moreover,
$$ R_{A_\gamma} = \gamma v - (1 - \gamma) \Id + 2 (1 - \gamma) w. $$ 
\item\label{eqq:1} $J_{B} = \big( \Id - \frac{1}{2} \PR_{ U}\big) - \PR_{ U^{\perp}} a \ \ $ and $\ \ R_{B} = \big( \Id - \PR_{ U}\big) - 2 \PR_{ U^{\perp}} a$.
\item\label{eqq:2} We have 
$$J_{B_\gamma } = \gamma \Big( \big( \Id - \frac{1}{2} \PR_{ U}\big) - \PR_{ U^{\perp}} a\Big) + (1 - \gamma ) w,$$ and
$$R_{B_\gamma} =  (2 \gamma - 1) \Id - \gamma \PR_{ U} - 2 \gamma \PR_{ U^{\perp}} a + 2 (1 - \gamma ) w.$$ 
\item\label{eq:s1} $T_{A,B}=  \big( (\Id + v)/2 \big) -  \PR_{ U^{\perp}} a$.
\item\label{eq:s2} $T_{B, A} = \big( (\Id + v) / 2 \big)$.
\item\label{ken:01} $J_{B} R_{A} = v - \PR_{ U^\perp} a$.
\item\label{ken:011} $J_{A} R_{B} =  \big( (\Id + v) / 2 \big) - ( \PR_{ U} / 2) - \PR_{ U^{\perp}} a$.
\item\label{ken:11} $J_{B} R_{A_\gamma} = \gamma v + (1 - \gamma) \Big( \Big( \frac{1}{2} \PR_{ U} - \Id\big) - \big( \PR_{ U} - 2 \Id \big) w \Big) - \PR_{ U^{\perp}} a$.
\item\label{ken:0011} $J_{A} R_{B_\gamma } = \frac{1}{2} \Big( (2 \gamma - 1) \Id - \gamma \PR_{ U} - 2 \gamma \PR_{ U^\perp} a + 2 (1 - \gamma ) w + v\Big)$.
\item\label{tab:1} Suppose $k := \lambda \gamma \Big[ (2 \gamma - 1)v + 4 (1 - \gamma ) w -2 ( 1 - \gamma ) \PR_{ U} w - 2 \PR_{ U^{\perp}} a\Big]$. Then 
\begin{align}
T_{A_\gamma, B_\gamma} (x) & = \big( 1  -  \lambda \gamma (3 - 2 \gamma) \big) x + \lambda \gamma ( 1 -  \gamma ) \PR_{ U} x + k. 
\end{align}	
\item\label{tab:2} Suppose $ l:= \lambda \gamma \Big[ 2 (1 - \gamma ) \PR_{ U^\perp} a + v + 2 ( 1 - \gamma ) w\Big]$. Then 
\begin{equation}
T_{B_\gamma, A_\gamma } (x) = \Big( 1 - \lambda \gamma (3 - 2 \gamma )\Big) x + \lambda \gamma (1 - \gamma ) \PR_{ U} x + l.
\end{equation}
\end{enumerate}
\end{example}

\begin{proof}
\ref{eqq:3}: Let $y \in \mathcal{H}$ and set $x = J_{A} y $. Then $y \in ( \Id +A) x \Leftrightarrow y =  2x -v  \Leftrightarrow x = ((y+ v)/ 2) \Rightarrow J_{A} = \big( (\Id+ v)/2 \big)$. Therefore, $ R_{A}  =  2  \big( (\Id+ v)/2 \big) - \Id  \Leftrightarrow R_{B} = v $ by \eqref{quli1}. \\
\ref{eqq:4}: Combine \ref{eqq:3} and \eqref{eq:ros} gives 
\begin{align*}
R_{A_\gamma } (x)& = 2 \gamma  \big( (x+ v)/2 \big) + 2 (1 - \gamma) w- x  \\
& = \gamma \big( x + v\big) + 2 \big( 1 - \gamma \big) w - x \\
& = \gamma v - \big( 1 - \gamma \big) x + 2 \big( 1 - \gamma \big) w.
\end{align*}	
\ref{eqq:1}: Let $y \in \mathcal{H}$ and set $x = J_{B} y $. Our goal is to find $x$. We have
\begin{equation*}
\begin{split}
y \in ( \Id + \PR_{a +U} ) x & \Leftrightarrow y =  x +  a + \PR_{U} (x - a)\\
& \Leftrightarrow y =  x +  (\Id -  \PR_{U}) a+ \PR_{U} x  \\
& \Leftrightarrow y =  x +  \PR_{ U^{\perp}} a+ \PR_{U} x .
\end{split}
\end{equation*}
Hence,
\begin{equation}\label{ranse1yy2707}
y = x +  \PR_{ U^{\perp}} a+ x^{*} ,  \ \textrm{where} \ \  x^{*} = \PR_{U} x.
\end{equation} 
Applying $\PR_{U}$ to \eqref{ranse1yy2707} gives
\begin{equation}\label{ranse1yy2708}
\PR_{U}y = \PR_{U}x+ \PR_{U} \PR_{ U^{\perp}} a + x^{*}  \Leftrightarrow \PR_{U}y  = 2 x^{*}   \Leftrightarrow x^{*} = \frac{1}{2} \PR_{U} y. 
\end{equation}
Substituting \eqref{ranse1yy2708} back into \eqref{ranse1yy2707} gives 
\begin{equation*}\label{ranse1yy2709}
y = x + \PR_{U^{\perp}}a + \frac{1}{2} \PR_{U} y  \Leftrightarrow x = \Big( \Id - \frac{1}{2} \PR_{U} \Big) y -  \PR_{U^{\perp}} a. 
\end{equation*}
Therefore,
\begin{equation*}\label{ranse1yy2710}
J_{B} = (\Id - \frac{1}{2} \PR_{U}) -  \PR_{U^{\perp}} a, 
\end{equation*}
and 	
\begin{equation*}\label{ranse1yy27201}
\begin{split}
R_{B}  & = 2 \ \big( \Id - \frac{1}{2} \PR_{U} \big) - 2 \PR_{U^{\perp}} a  - \Id  =  ( \Id -  \PR_{U}) - 2 \PR_{U^{\perp}} a,
\end{split}
\end{equation*}
by \eqref{quli1}.\\
\ref{eqq:2}: It follows from \eqref{eq:ros} and \ref{eqq:1} that 
\begin{align*}
J_{B_\gamma} &= \gamma J_{B}+ 2 (1 - \gamma) w \\
& = \gamma \Big(  \Big(\Id - \frac{1}{2} \PR_{U} \Big) -  \PR_{U^{\perp}} a \Big) + (1 - \gamma) w.
\end{align*}
By \eqref{eq:ref}, we have 
\begin{align*}
R_{B_\gamma } & = 2 \gamma \Big(  \Big(\Id - \frac{1}{2} \PR_{U} \Big) -  \PR_{U^{\perp}} a \Big) + 2 (1 - \gamma) w - \Id \\
& = (2 \gamma - 1) \Id - \gamma \PR_{ U} - 2 \gamma \PR_{ U^{\perp}} a + 2 (1 - \gamma ) w.
\end{align*}
\ref{eq:s1}: Using \ref{eqq:1}, \ref{eqq:3}, and \eqref{toon} give 
\begin{align*}
T_{A,B} ( x ) & =  x - \Big( \frac{x + v}{2}\Big) + \Big( \Id - \frac{1}{2} \PR_{ U} - \PR_{ U^{\perp}} a\Big) R_{A}x \\
& = \frac{x}{2} - \frac{v}{2} + R_{A} x - \frac{1}{2} \PR_{ U} R_{A} x- \PR_{ U^{\perp}} a \\
& = \Big( \frac{ x+ v}{2}\Big) - \PR_{ U^{\perp}} a.
\end{align*}
\ref{eq:s2}: From \ref{eqq:1}, \ref{eqq:4}, and \eqref{toon}, we have 
\begin{align*}
T_{B, A} (x)& = \frac{1}{2} x + \frac{1}{2} R_{A} R_{B} x \\
& = \frac{1}{2} x + \frac{1}{2} (v) \Big( \big( x - \PR_{ U} x \big) - 2 \PR_{ U^{\perp}} a \Big) \\
& = \frac{1}{2} \big( x + v\big).
\end{align*}

\ref{ken:01}: By using \ref{eqq:1} and \ref{eqq:3}, we obtain 
  \begin{align*}
   J_{B} R_{A}x&= \big( \Id - \frac{1}{2} \PR_{ U} - \PR_{ U^{\perp}} a \big) R_{A}x \\
   & = R_{A} x - \frac{1}{2} \PR_{ U} R_{A}x - \PR_{ U^{\perp}} a \\
   & = v - \PR_{ U^{\perp}} a.
  \end{align*}
\ref{ken:011}: Using \ref{eqq:3} and \ref{eqq:1} gives 
\begin{align*} 
 J_{A} R_{B}x&= \Big( \frac{\Id + v}{2}\Big) R_{B} x \\
 & = \frac{1}{2} R_{B} x + \frac{1}{2} v \\
 & = \frac{1}{2} \Big( x - \PR_{ U} x - 2 \PR_{ U^\perp} a\Big) + \frac{1}{2} v \\
 & =   \Big( \frac{x + v}{2}\Big) - \frac{1}{2} \PR_{ U} x - \PR_{ U^\perp} a.
\end{align*}

\ref{ken:11}: By using \ref{eqq:4} and \ref{eqq:1}, we obtain 
\begin{align*}  
	J_{B} R_{A_\gamma}x&= \big( \Id - \frac{1}{2} \PR_{ U} - \PR_{ U^{\perp}} a \big) R_{A_\gamma}x \\
	& = R_{A_\gamma} x - \frac{1}{2} \PR_{ U} R_{A_\gamma}x - \PR_{ U^{\perp}} a \\
	& = \gamma v - (1 - \gamma) x + 2 (1 - \gamma) w -\frac{1}{2} \PR_{ U} \Big( 2 (1 - \gamma) w - (1 - \gamma) x \Big) - \PR_{ U^{\perp}} a \\
	& = \gamma v + (1 - \gamma) \Big( \Big( \frac{1}{2} \PR_{ U} - \Id \Big) x - \big( \PR_{ U} - 2 \Id \big) w\Big)- \PR_{ U^\perp} a.
\end{align*}
\ref{ken:0011}: From \cref{eqq:3} and \cref{eqq:2}, we obtain 
\begin{align*}
J_{A} R_{B_\gamma } x & = \Big( \frac{\Id + v}{2}\Big) \Big( ( 2 \gamma -1) x - \gamma \PR_{ U} x - 2 \gamma \PR_{ U^\perp} a + 2 (1 - \gamma ) w\Big) \\
& = \frac{1}{2} \Big(  (2 \gamma - 1) x - \gamma \PR_{ U} x - 2 \gamma \PR_{ U^{\perp}} a + 2 (1 - \gamma ) w + v\Big).
\end{align*}
\ref{tab:1}: Combining \ref{ken:01}, \ref{ken:11}, \ref{eq:s1}, and \cref{eq:m5676} gives
\begin{align*} 
T_{A_\gamma, B_\gamma}x &= T_{A, B} + (1 - 2 \lambda \gamma ) J_{A} - J_{B} R_{A} + 2 \lambda \gamma J_{B} R_{A_\gamma }\\
& = \Big( \frac{x+ v}{2}\Big) - \PR_{ U^{\perp}} a + ( 1 - 2 \lambda \gamma ) \Big( \frac{x+ v}{2}\Big) - J_{B} R_{A} + 2 \lambda \gamma J_{B} R_{A_\gamma}\\
& = (x+ v) - \PR_{ U^\perp} a - \lambda \gamma (x+ v) 	- J_{B} R_{A} + 2 \lambda \gamma J_{B} R_{A_\gamma }\\
& =  (x+ v) - \lambda \gamma (x+ v) - v + 2 \lambda \gamma J_{B} R_{A_\gamma }\\
& = x - \lambda \gamma (x+ v) + 2 \lambda \gamma J_{B} R_{A_\gamma }\\
& =  x - \lambda \gamma (x+ v) + 2 \lambda \gamma \Big[  \gamma v + \frac{(1 - \gamma )}{2} \PR_{ U} x - (1 - \gamma ) (x + \PR_{ U} w - 2 w) - \PR_{ U^{\perp}} a  \Big] \\
& =  \big( 1  -  \lambda \gamma (3 - 2 \gamma) \big) x + \lambda \gamma ( 1 -  \gamma ) \PR_{ U} x + \lambda \gamma \Big( (2 \gamma - 1)v \\
& \ \ \ \ \ \ \ \ \ \ \ \ \ \ + 4 (1 - \gamma ) w -2 ( 1 - \gamma ) \PR_{ U} w - 2 \PR_{ U^{\perp}} a\Big).
\end{align*}
   
\ref{tab:2}: From \cref{eqq:1},  \cref{eq:s2},  \cref{ken:011}, and \cref{ken:0011}, we obtain 
\begin{align*}
T_{B_\gamma, A_\gamma } x &= T_{B, A} x + (1 - 2 \lambda \gamma) J_{B} x - J_{A} R_{B} x + 2 \lambda \gamma J_{A} R_{B_\gamma } x \\
& = \Big( \frac{ x+ v}{2}\Big) + (1 - 2 \lambda \gamma) J_{B} x -  \Big( \frac{ x+ v}{2}\Big) + \frac{1}{2} \PR_{ U} x + \PR_{ U^\perp} a + 2 \lambda \gamma J_{A} R_{B_\gamma } x \\
& =  (1 - 2 \lambda \gamma) \Big( x - \frac{1}{2} \PR_{ U} x - \PR_{ U^\perp}a \Big) + \frac{1}{2} \PR_{ U} x + \PR_{ U^\perp} a + 2 \lambda \gamma J_{A} R_{B_\gamma } x \\
& = ( 1 - 2 \lambda \gamma ) x + 2 \lambda \gamma \PR_{ U^\perp} a + \lambda \gamma \PR_{ U} x + 2 \lambda \gamma J_{A} R_{B_\gamma } x \\ 
& = \Big( 1 - \lambda \gamma (3 - 2 \gamma )\Big) x + \lambda \gamma (1 - \gamma ) \PR_{ U} x + \lambda \gamma \Big[ 2 (1 - \gamma ) \PR_{ U^\perp} a + v + 2 ( 1 - \gamma ) w\Big], 
\end{align*}
which verifies \ref{tab:2}.	
\end{proof}

\begin{lemma}\label{Lem:rs1}\normalfont  Let $A: \mathcal{H} \To \mathcal{H}$ be maximally monotone and $\gamma \in \left] 0, 1 \right[$. Suppose $J_{A}$ is affine. Then 
\begin{equation}
J_{A_\gamma } R_{A_\gamma } =  R_{A_\gamma } J_{A_\gamma}.
\end{equation}
\end{lemma}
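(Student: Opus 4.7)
The plan is to push everything back to the resolvent and exploit affineness to commute the two operators. First, observe that the hypothesis ``$J_A$ is affine,'' combined with the identity $J_{A_\gamma}=\gamma J_A+(1-\gamma)w$ from \eqref{eq:ros}, immediately yields that $J_{A_\gamma}$ is affine as well. Equivalently, one can invoke \cref{Prop11}\ref{re:1}, since the conditions ``$A$ is an affine relation'' and ``$J_A$ is affine'' are interchangeable by the standard equivalence used in \cref{Prop11}\ref{re:2}.

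Next, I would use the defining identity $R_{A_\gamma}=2J_{A_\gamma}-\Id$ from \eqref{quli1} (applied to $A_\gamma$ in place of $A$) to rewrite both sides as expressions in $J_{A_\gamma}$ alone. On one hand,
\[
R_{A_\gamma}J_{A_\gamma}=\bigl(2J_{A_\gamma}-\Id\bigr)J_{A_\gamma}=2J_{A_\gamma}^{2}-J_{A_\gamma},
\]
which requires no hypothesis. On the other hand,
\[
J_{A_\gamma}R_{A_\gamma}(x)=J_{A_\gamma}\bigl(2J_{A_\gamma}(x)-x\bigr),
\]
and this is precisely where affineness is used: for any affine map $T$ and any $u,v$, the identity $T(2u-v)=2T(u)-T(v)$ holds because the coefficients $2$ and $-1$ sum to $1$. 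Applying this with $T=J_{A_\gamma}$, $u=J_{A_\gamma}(x)$, $v=x$ gives $J_{A_\gamma}R_{A_\gamma}(x)=2J_{A_\gamma}^{2}(x)-J_{A_\gamma}(x)$.

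Comparing the two expressions yields the desired identity $J_{A_\gamma}R_{A_\gamma}=R_{A_\gamma}J_{A_\gamma}$. The main (and only) delicate point is the step where affineness of $J_{A_\gamma}$ is invoked to pull the scalar combination $2J_{A_\gamma}(x)-x$ through $J_{A_\gamma}$; without this hypothesis the identity would fail in general. Everything else is a direct substitution using \eqref{quli1} and \eqref{eq:ros}.
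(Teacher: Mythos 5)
Your proof is correct and follows essentially the same route as the paper: both arguments reduce to the observation that $J_{A_\gamma}$ is affine (via \eqref{eq:ros}, or \cref{Prop11}\ref{re:1}) and that an affine resolvent commutes with its reflected resolvent $R_{A_\gamma}=2J_{A_\gamma}-\Id$. The only difference is that the paper outsources this last commutation step to \cite[Lemma~2.4(i)]{bauschke2016order}, whereas you prove it directly by expanding $J_{A_\gamma}(2J_{A_\gamma}(x)-x)=2J_{A_\gamma}^{2}(x)-J_{A_\gamma}(x)$, which makes the argument self-contained.
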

\begin{proof} Combine \cref{Prop11}\ref{re:1} and  \cite[Lemma~2.4 (i)]{bauschke2016order}.
\end{proof}
	
\begin{lemma}\label{Lem:rs2}\normalfont Suppose that $A$ is an affine relation. Then we have 
\begin{align}
R_{A_\gamma} T_{A_\gamma, B_\gamma}- T_{B_\gamma, A_\gamma} R_{A_\gamma} & = 2 \big( J_{A_\gamma } T_{A_\gamma, B_\gamma} - (1 - \lambda) J_{A_\gamma} - \lambda J_{A_\gamma } R_{B_\gamma } R_{A_\gamma}\big).\label{eq:m8} \\
& = 2 \gamma \big( J_{A} T_{A,B} - (1 - \lambda) J_{A} - \lambda J_{A}  R_{B_\gamma } R_{A_\gamma}\big).\label{eq:m9}
\end{align}
\end{lemma}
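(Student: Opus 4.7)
My plan is to verify \eqref{eq:m8} and \eqref{eq:m9} separately by direct algebraic manipulation; in fact neither identity requires the affineness hypothesis on $A$, which is rather the crucial ingredient in the companion result \cref{Lem:rs3}.

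For \eqref{eq:m8}, I would rely solely on the definitions $R_{A_\gamma} = 2J_{A_\gamma} - \Id$ from \eqref{quli1}, together with \eqref{twithouttide}, which also gives $T_{B_\gamma, A_\gamma} = (1-\lambda)\Id + \lambda R_{A_\gamma} R_{B_\gamma}$ by symmetry. Distributing $R_{A_\gamma} = 2J_{A_\gamma} - \Id$ through $T_{A_\gamma, B_\gamma}$ yields
\[
R_{A_\gamma} T_{A_\gamma, B_\gamma} = 2 J_{A_\gamma} T_{A_\gamma, B_\gamma} - (1-\lambda)\Id - \lambda R_{B_\gamma} R_{A_\gamma},
\]
and expanding $T_{B_\gamma, A_\gamma} R_{A_\gamma} = (1-\lambda) R_{A_\gamma} + \lambda R_{A_\gamma} R_{B_\gamma} R_{A_\gamma}$ via the same substitution gives
\[
T_{B_\gamma, A_\gamma} R_{A_\gamma} = 2(1-\lambda) J_{A_\gamma} - (1-\lambda)\Id + 2\lambda J_{A_\gamma} R_{B_\gamma} R_{A_\gamma} - \lambda R_{B_\gamma} R_{A_\gamma}.
\]
Subtracting cancels the $(1-\lambda)\Id$ and $\lambda R_{B_\gamma} R_{A_\gamma}$ summands, leaving exactly $2\bigl(J_{A_\gamma} T_{A_\gamma, B_\gamma} - (1-\lambda) J_{A_\gamma} - \lambda J_{A_\gamma} R_{B_\gamma} R_{A_\gamma}\bigr)$, which is \eqref{eq:m8}.

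For \eqref{eq:m9}, I would invoke the resolvent-average identity \eqref{eq:ros}, namely $J_{A_\gamma} = \gamma J_A + (1-\gamma) w$, interpreted pointwise. Plugging this into each of the three occurrences of $J_{A_\gamma}$ on the right-hand side of \eqref{eq:m8}, the constant $(1-\gamma) w$ enters with signed coefficients $1$, $-(1-\lambda)$, and $-\lambda$, whose total is $1 - (1-\lambda) - \lambda = 0$. The $w$-contributions therefore cancel, and what remains factors as $\gamma$ times the bracket in \eqref{eq:m9}, establishing the second equality.

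The only obstacle is careful bookkeeping: one must keep precise track of the $(1-\lambda)\Id$ and $\lambda R_{B_\gamma} R_{A_\gamma}$ summands in the first step and of the three $w$-coefficients in the second. There is no conceptual subtlety. Affineness of $A$ becomes decisive only when these identities are invoked to prove \cref{Lem:rs3}: by \cref{Prop11}\ref{re:1}, $J_{A_\gamma}$ is then affine, whence $J_{A_\gamma} T_{A_\gamma, B_\gamma} = (1-\lambda) J_{A_\gamma} + \lambda J_{A_\gamma} R_{B_\gamma} R_{A_\gamma}$, since $T_{A_\gamma, B_\gamma}$ is the affine combination $(1-\lambda)\Id + \lambda R_{B_\gamma} R_{A_\gamma}$. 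Consequently, the bracket in \eqref{eq:m8} collapses to zero and the intertwining identity $R_{A_\gamma} T_{A_\gamma, B_\gamma} = T_{B_\gamma, A_\gamma} R_{A_\gamma}$ emerges.
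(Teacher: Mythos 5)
Your argument is correct and follows essentially the same route as the paper's proof: expand $R_{A_\gamma}=2J_{A_\gamma}-\Id$, distribute right-composition over the sums to get \eqref{eq:m8}, then substitute $J_{A_\gamma}=\gamma J_A+(1-\gamma)w$ and note that the $w$-terms cancel since $1-(1-\lambda)-\lambda=0$; your observation that the affineness of $A$ is never actually used in this lemma (only later, in \cref{Lem:rs3}) is also accurate. One caveat: what your computation (and the paper's own proof) yields is $2\gamma\big(J_A T_{A_\gamma,B_\gamma}-(1-\lambda)J_A-\lambda J_A R_{B_\gamma}R_{A_\gamma}\big)$, whereas \eqref{eq:m9} as printed has $J_A T_{A,B}$ in place of $J_A T_{A_\gamma,B_\gamma}$; these operators differ in general, so the printed bracket is evidently a typo (the $T_{A_\gamma,B_\gamma}$ version is the one invoked in the proof of \cref{Lem:rs3}), and your assertion that the remainder ``factors as $\gamma$ times the bracket in \eqref{eq:m9}'' is true only for that corrected bracket.
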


\begin{proof}
It follws from \cref{Prop11}\ref{re:2} that $A_\gamma $ is an affine relation. Hence, by using \cref{eq:ref} and \cref{eq:m4}, we obtain 
\begin{align*}
R_{A_\gamma} T_{A_\gamma, B_\gamma} - T_{B_\gamma, A_\gamma} R_{A_\gamma} & = \big( 2 J_{A_\gamma} - \Id \big) T_{A_\gamma, B_\gamma} - T_{B_\gamma, A_\gamma} R_{A_\gamma} \\
& = 2 J_{A_\gamma} T_{A_\gamma, B_\gamma} - T_{A_\gamma, B_\gamma} - T_{ B_\gamma, A_\gamma} R_{A_\gamma} \\
& = 2 J_{A_\gamma} T_{A_\gamma, B_\gamma} - T_{A_\gamma, B_\gamma} - \big( \Id + 2 \lambda J_{A_\gamma} R_{B_\gamma} - 2 \lambda J_{B_\gamma}\big) R_{A_\gamma} \\
& = 2 J_{A_\gamma} T_{A_\gamma, B_\gamma} - T_{A_\gamma, B_\gamma} - R_{A_\gamma } - 2 \lambda \big( J_{A_\gamma} R_{B_\gamma} R_{A_\gamma} - J_{B_\gamma } R_{A_\gamma}\big) \\
& = 2 J_{A_\gamma} T_{A_\gamma, B_\gamma} - \big( \Id + 2 \lambda J_{B_\gamma } R_{A_\gamma} - 2 \lambda J_{A_\gamma }\big) - R_{A_\gamma}\nonumber
\\
&\qquad  - 2 \lambda \big( J_{A_\gamma} R_{B_\gamma} R_{A_\gamma} - J_{B_\gamma } R_{A_\gamma}\big) \\
& = 2 J_{A_\gamma} T_{A_\gamma, B_\gamma} - \Id + 2 \lambda J_{A_\gamma} - R_{A_\gamma} - 2 \lambda J_{A_\gamma } R_{B_\gamma } R_{A_\gamma} \\
& = 2 J_{A_\gamma} T_{A_\gamma, B_\gamma} - 2 (1 - \lambda ) J_{A_\gamma} - 2 \lambda J_{A_\gamma} R_{B_\gamma } R_{A_\gamma},
\end{align*}
which verifies \cref{eq:m8}. Next, by using \cref{eq:ros} and \cref{eq:m8}:
\begin{align*} 
R_{A_\gamma} T_{A_\gamma, B_\gamma} - T_{B_\gamma, A_\gamma} R_{A_\gamma} &  = 2 J_{A_\gamma} T_{A_\gamma, B_\gamma} - 2 (1 - \lambda ) J_{A_\gamma} - 2 \lambda J_{A_\gamma} R_{B_\gamma } R_{A_\gamma} \\
& = 2 \big( \gamma J_{A}+ (1 - \gamma ) w \big) T_{A_\gamma, B_\gamma} - 2 (1 - \lambda) \big( \gamma J_{A} + (1 - \gamma ) w \big)\nonumber
\\
&\qquad - 2 \lambda \big( \gamma J_{A} + (1 - \gamma ) w \big) R_{B_\gamma } R_{A_\gamma } \\
& = 2 \gamma J_{A} T_{A_\gamma, B_\gamma } - 2 \gamma (1 - \lambda) J_{A} - 2 \lambda \gamma J_{A} R_{B_\gamma } R_{A_\gamma}.
\end{align*}	
\end{proof}

\begin{theorem}\label{Lem:rs3}\normalfont  Let $\gamma \in \left] 0,1 \right[$, $\lambda \in \left] 0, 1 \right]$ and suppose that $A$ is an affine realtion. Then 
\begin{equation}\label{eq:l1} 
 R_{A_\gamma} T^{n}_{A_\gamma, B_\gamma } = T^{n}_{B_\gamma, A_\gamma } R_{A_\gamma}.  
\end{equation}	
\end{theorem}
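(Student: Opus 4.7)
The plan is an induction on $n$, with the base case $n = 1$ extracted from \cref{Lem:rs2} and \cref{Prop11}, and the inductive step obtained by intertwining through one extra copy of the relevant operator.

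For the base case I would start from \cref{eq:m9} in \cref{Lem:rs2}, which gives
\[
R_{A_\gamma} T_{A_\gamma, B_\gamma} - T_{B_\gamma, A_\gamma} R_{A_\gamma}
 = 2\gamma\bigl(J_A T_{A_\gamma, B_\gamma} - (1-\lambda) J_A - \lambda J_A R_{B_\gamma} R_{A_\gamma}\bigr),
\]
so the task reduces to showing the right-hand side vanishes. By \cref{Prop11}\ref{re:1} the operator $J_{A_\gamma}$ is affine, so any affine combination passes through it; applying this to the definition $T_{A_\gamma, B_\gamma} = (1-\lambda)\Id + \lambda R_{B_\gamma} R_{A_\gamma}$ gives
\[
J_{A_\gamma} T_{A_\gamma, B_\gamma} = (1-\lambda) J_{A_\gamma} + \lambda J_{A_\gamma} R_{B_\gamma} R_{A_\gamma}.
\]
Substituting this into \cref{eq:m8} (equivalently, into \cref{eq:m9} after using \eqref{eq:ros} to relate $J_{A_\gamma}$ and $J_A$) collapses the bracketed expression to zero, yielding $R_{A_\gamma} T_{A_\gamma, B_\gamma} = T_{B_\gamma, A_\gamma} R_{A_\gamma}$.

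For the inductive step, suppose $R_{A_\gamma} T^{n}_{A_\gamma, B_\gamma} = T^{n}_{B_\gamma, A_\gamma} R_{A_\gamma}$. Writing $T^{n+1}_{A_\gamma, B_\gamma} = T^{n}_{A_\gamma, B_\gamma} \circ T_{A_\gamma, B_\gamma}$ and composing on the left with $R_{A_\gamma}$, I would first apply the inductive hypothesis to push $R_{A_\gamma}$ past $T^{n}_{A_\gamma, B_\gamma}$, then apply the base case to push it past the remaining $T_{A_\gamma, B_\gamma}$:
\[
R_{A_\gamma} T^{n+1}_{A_\gamma, B_\gamma}
 = T^{n}_{B_\gamma, A_\gamma} R_{A_\gamma} T_{A_\gamma, B_\gamma}
 = T^{n}_{B_\gamma, A_\gamma} T_{B_\gamma, A_\gamma} R_{A_\gamma}
 = T^{n+1}_{B_\gamma, A_\gamma} R_{A_\gamma}.
\]
This closes the induction.

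The only genuine obstacle is the base case, and specifically the step of commuting $J_{A_\gamma}$ past the convex combination defining $T_{A_\gamma, B_\gamma}$; this is precisely where affinity of $A$ is used, via \cref{Prop11}\ref{re:1}. Once the base case is in hand, the inductive step is purely formal associativity of composition together with the inductive hypothesis, so no further hypothesis on $B$ or on $\lambda$ beyond those already assumed is needed.
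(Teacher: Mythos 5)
Your proof is correct and follows essentially the same route as the paper: the base case is obtained by collapsing the right-hand side of \cref{eq:m8}/\cref{eq:m9} using the affinity of the resolvent (so that it distributes over the affine combination $(1-\lambda)\Id + \lambda R_{B_\gamma}R_{A_\gamma}$), and the inductive step is the same formal intertwining argument, differing only in whether the base case or the inductive hypothesis is applied first.
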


\begin{proof}
We will prove by induction that $R_{A_\gamma} T^{n}_{A_\gamma, B_\gamma} = T^{n}_{B_\gamma, A_\gamma } R_{A_\gamma}$. Let $n = 1$. By using \cref{eq:m9}, we obtain 
\begin{align*}
R_{A_\gamma} T_{A_\gamma, B_\gamma} - T_{B_\gamma, A_\gamma} R_{A_\gamma} & = 2 \gamma J_{A} T_{A_\gamma, B_\gamma} - 2 \gamma  (1 - \lambda) J_{A} - 2 \lambda \gamma J_{A} R_{B_\gamma} R_{A_\gamma} \\
& =  J_{A} \Big( 2 \gamma \big( T_{A_\gamma, B_\gamma} - (1 - \lambda) \Id \big) \Big) - 2 \lambda \gamma  J_{A} R_{B_\gamma}R_{A_\gamma} \\
& = J_{A} \Big( 2 \gamma \big( (1 - \lambda ) \Id + \lambda R_{B_\gamma} R_{A_\gamma }\big) - 2 \gamma (1 - \lambda) \Id \Big)  - 2 \lambda \gamma J_{A} R_{B_\gamma}R_{A_\gamma} \\
& =  2 \lambda \gamma J_{A} R_{B_\gamma } R_{A_\gamma } - 2 \lambda \gamma J_{A} R_{B_\gamma} R_{A_\gamma} = 0.
\end{align*}
Hypothesis assumption: when $n = k$;
\begin{equation}\label{eqq:l2}
R_{A_\gamma} T^{k}_{A_\gamma, B_\gamma} -T^{k}_{B_\gamma, A_\gamma} R_{A_\gamma} = 0.
\end{equation}
When $n = k+ 1$ and by using \cref{eqq:l2}, we have 
\begin{align*}
R_{A_\gamma} T^{k+1}_{A_\gamma, B_\gamma} - T^{k+1}_{B_\gamma, A_\gamma} R_{A_\gamma} & = R_{A_\gamma} T^{k}_{A_\gamma, B_\gamma} T_{A_\gamma, B_\gamma} - T^{k}_{B_\gamma, A_\gamma} T_{B_\gamma, A_\gamma} R_{A_\gamma} \\
& = R_{A_\gamma} T^{k}_{A_\gamma, B_\gamma} T_{A_\gamma, B_\gamma} - T^{k}_{B_\gamma, A_\gamma} R_{A_\gamma} T_{A_\gamma, B_\gamma} \\
& = R_{A_\gamma} T^{k}_{A_\gamma, B_\gamma} T_{A_\gamma, B_\gamma} - R_{A_\gamma }  T^{k}_{A_\gamma, B_\gamma} T_{A_\gamma, B_\gamma} \\
& = 0.
\end{align*}
Hence, \eqref{eq:l1} is verified. 
\end{proof}

\begin{lemma}\normalfont Suppose both $A$ and $B$ are affine relation. Then 
\begin{enumerate}
\item\label{rr0} $T_{A_\gamma, B_\gamma}$ and $T_{B_\gamma, A_\gamma}$ are affine.
\item\label{rr1} $T_{A_\gamma, B_\gamma} R_{B_\gamma} R_{A_\gamma} = R_{B_\gamma} R_{A_\gamma} T_{A_\gamma, B_\gamma}$.
\item\label{rr2} $\lambda^{-2} \big( T_{A_\gamma, B_\gamma}T_{B_\gamma, A_\gamma} - T_{B_\gamma, A_\gamma} T_{A_\gamma, B_\gamma}\big)   =  R_{B_\gamma} R^2_{A_\gamma} R_{B_\gamma} - R_{A_\gamma} R^2_{B_\gamma} R_{A_\gamma}$.
\item\label{rr3} $ T_{A_\gamma, B_\gamma}T_{B_\gamma, A_\gamma} = T_{B_\gamma, A_\gamma}T_{A_\gamma, B_\gamma} \iff R_{B_\gamma} R^2_{A_\gamma} R_{B_\gamma} = R_{A_\gamma} R^2_{B_\gamma} R_{A_\gamma} $.
\item\label{rr4} If  $ R^2_{A_\gamma} =  R^2_{B_\gamma}$, then $T_{A_\gamma, B_\gamma}T_{B_\gamma, A_\gamma} = T_{B_\gamma, A_\gamma}T_{A_\gamma, B_\gamma}$.
\end{enumerate}	
\end{lemma}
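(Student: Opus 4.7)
The plan is to prove (i) by propagating the affineness hypothesis through the constructions $A_\gamma$, $J_{A_\gamma}$, $R_{A_\gamma}$ (and the analogous ones for $B$), and then to derive (ii)--(v) by purely algebraic manipulation of the defining identity $T_{A_\gamma, B_\gamma} = (1-\lambda)\Id + \lambda R_{B_\gamma}R_{A_\gamma}$ from \eqref{twithouttide} together with its counterpart for $T_{B_\gamma, A_\gamma}$. A useful observation is that only (i) really needs the affineness assumption: parts (ii)--(v) are formal consequences of the two definitions, so the affineness is being used only to guarantee that the operators involved are genuinely well-behaved affine maps whose compositions and powers make sense.

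For (i), I would apply \cref{Prop11}\ref{re:2} to $A$ and to $B$ separately to obtain that $A_\gamma$ and $B_\gamma$ are affine relations, and then \cref{Prop11}\ref{re:1} to conclude that $J_{A_\gamma}$ and $J_{B_\gamma}$ are affine. By \eqref{quli1}, the reflected resolvents $R_{A_\gamma} = 2J_{A_\gamma} - \Id$ and $R_{B_\gamma} = 2J_{B_\gamma} - \Id$ are affine, hence so are the compositions $R_{B_\gamma}R_{A_\gamma}$ and $R_{A_\gamma}R_{B_\gamma}$. The formula \eqref{twithouttide} then exhibits both $T_{A_\gamma, B_\gamma}$ and $T_{B_\gamma, A_\gamma}$ as affine combinations of $\Id$ and an affine composition, hence affine. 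For (ii), I would observe that $T_{A_\gamma, B_\gamma}$ is a polynomial of degree one in $S := R_{B_\gamma}R_{A_\gamma}$; expanding both $T_{A_\gamma, B_\gamma}S$ and $ST_{A_\gamma, B_\gamma}$ yields the common value $(1-\lambda)S + \lambda S^{2}$, which gives the equality. No affineness is actually needed here.

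For (iii), the core computation is the direct expansion of both compositions using \eqref{twithouttide}:
\begin{align*}
T_{A_\gamma, B_\gamma}T_{B_\gamma, A_\gamma} & = (1-\lambda)^{2}\Id + \lambda(1-\lambda)\bigl(R_{A_\gamma}R_{B_\gamma} + R_{B_\gamma}R_{A_\gamma}\bigr) + \lambda^{2}\, R_{B_\gamma}R^{2}_{A_\gamma}R_{B_\gamma},\\
T_{B_\gamma, A_\gamma}T_{A_\gamma, B_\gamma} & = (1-\lambda)^{2}\Id + \lambda(1-\lambda)\bigl(R_{B_\gamma}R_{A_\gamma} + R_{A_\gamma}R_{B_\gamma}\bigr) + \lambda^{2}\, R_{A_\gamma}R^{2}_{B_\gamma}R_{A_\gamma}.
\end{align*}
Subtracting and dividing by $\lambda^{2}$ (valid since $\lambda \in \opint{0,1}$) yields (iii). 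Part (iv) is then immediate since $\lambda^{2} \neq 0$, so vanishing of the left-hand side of (iii) is equivalent to vanishing of the right-hand side. For (v), the hypothesis $R^{2}_{A_\gamma} = R^{2}_{B_\gamma}$ gives $R_{B_\gamma}R^{2}_{A_\gamma}R_{B_\gamma} = R^{4}_{B_\gamma} = (R^{2}_{A_\gamma})^{2} = R^{4}_{A_\gamma} = R_{A_\gamma}R^{2}_{B_\gamma}R_{A_\gamma}$, and (iv) delivers the commutation. The only point requiring real care is the bookkeeping in (iii), in particular checking that the mixed $\lambda(1-\lambda)$ terms cancel upon subtraction; no analytic or monotone-operator difficulty arises anywhere in the argument.
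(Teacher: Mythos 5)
Your proposal is correct and follows essentially the same route as the paper: (iii) is obtained by expanding both products of $(1-\lambda)\Id + \lambda R_{B_\gamma}R_{A_\gamma}$ and $(1-\lambda)\Id + \lambda R_{A_\gamma}R_{B_\gamma}$, observing that the mixed $\lambda(1-\lambda)$ terms cancel, and (ii), (iv), (v) are the same polynomial-in-$R_{B_\gamma}R_{A_\gamma}$ and divide-by-$\lambda^2$ arguments the paper uses; your (i) is in fact more detailed than the paper's ``Clear.''

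One remark in your write-up is wrong and worth correcting: the claim that ``only (i) really needs the affineness assumption'' and that no affineness is needed in (ii). For nonlinear operators, composition distributes over pointwise sums only from the right: $(\alpha P+\beta Q)\circ R=\alpha(P\circ R)+\beta(Q\circ R)$ always, but $S\circ(\alpha P+\beta Q)\neq\alpha(S\circ P)+\beta(S\circ Q)$ in general. So the step $S\circ\big((1-\lambda)\Id+\lambda S\big)=(1-\lambda)S+\lambda S^{2}$ in (ii), and likewise the four-term expansions in (iii), require $S=R_{B_\gamma}R_{A_\gamma}$ and $R_{A_\gamma}R_{B_\gamma}$ to preserve affine combinations (the coefficients $(1-\lambda)+\lambda=1$ make these affine combinations, so affineness suffices). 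This is precisely where (i) is used in parts (ii)--(v); the rest of your argument is fine.
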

\begin{proof}
\ref{rr0}: Clear.
\ref{rr1}: From \ref{rr0} we obtain 
\begin{align*}
T_{A_\gamma, B_\gamma} R_{B_\gamma}R_{A_\gamma} & = T_{A_\gamma, B_\gamma} \big(  \lambda^{-1} T_{A_\gamma, B_\gamma} - \lambda^{-1}(1 - \lambda)\Id \big) \\
& = \lambda^{-1} T^{2}_{(A_\gamma, B_\gamma)} - \lambda^{-1}(1 - \lambda)T_{A_\gamma, B_\gamma} \\
& = \big( \lambda^{-1} T_{A_\gamma, B_\gamma} - \lambda^{-1}(1 - \lambda)  \Id \big) T_{A_\gamma, B_\gamma} \\
& = R_{B_\gamma} R_{A_\gamma} T_{A_\gamma, B_\gamma}.	
\end{align*}
\ref{rr2}: By using \eqref{twithouttide}, we have 
\begin{align*}
\lambda^{-2} \big( T_{A_\gamma, B_\gamma}  T_{B_\gamma, A_\gamma} \big) & = \lambda^{-2} \big( (1 - \lambda) \Id + \lambda R_{B_\gamma} R_{A_\gamma} \big) \big( (1 - \lambda) \Id + \lambda R_{A_\gamma} R_{B_\gamma} \big)  	
\end{align*}
Hence,
\begin{align}
\lambda^{-2} \big( T_{A_\gamma, B_\gamma}  T_{B_\gamma, A_\gamma} \big) & = \lambda^{-2} \big( (1 - \lambda)^2 \Id + \lambda ( 1 - \lambda) R_{A_\gamma} R_{B_\gamma} + \lambda (1 - \lambda) R_{B_\gamma} R_{A_\gamma} \nonumber
\\
&\qquad+ \lambda^2 R_{B_\gamma} R^2_{A_\gamma} R_{B_\gamma}\big).\label{nn1}
\end{align} 
Moreover, 
\begin{align*}
	\lambda^{-2} \big( T_{B_\gamma, A_\gamma}  T_{A_\gamma, B_\gamma} \big) & = \lambda^{-2} \big( (1 - \lambda) \Id + \lambda R_{A_\gamma} R_{B_\gamma} \big) \big( (1 - \lambda) \Id + \lambda R_{B_\gamma} R_{A_\gamma} \big)  	
\end{align*}
Hence,
\begin{align}
	\lambda^{-2} \big( T_{B_\gamma, A_\gamma}  T_{A_\gamma, B_\gamma} \big) & = \lambda^{-2} \big( (1 - \lambda)^2 \Id + \lambda ( 1 - \lambda) R_{B_\gamma} R_{A_\gamma} + \lambda (1 - \lambda) R_{A_\gamma} R_{B_\gamma} \nonumber
	\\
	&\qquad+ \lambda^2 R_{A_\gamma} R^2_{B_\gamma} R_{A_\gamma}\big).\label{nn2}
\end{align}
Subtracting \eqref{nn2} from \eqref{nn1} gives
$$ \lambda^{-2} \big( T_{A_\gamma, B_\gamma}T_{B_\gamma, A_\gamma} - T_{B_\gamma, A_\gamma} T_{A_\gamma, B_\gamma}\big)   =  R_{B_\gamma} R^2_{A_\gamma} R_{B_\gamma} - R_{A_\gamma} R^2_{B_\gamma} R_{A_\gamma}. $$
\ref{rr3} and \ref{rr4}: They follow from \ref{rr2}. 
\end{proof}

\begin{proposition}\label{EEXA1} Let $U $ be closed linear subspace and $A = \Id + v$, where $v \in U^\perp$. Let $ B = \PR_{a+ U}$, where $a \in U^\perp$ and $a \neq v$. Then the following hold:
\begin{enumerate}
\item\label{Ex.11}  We have 
\begin{align*}
T_{A_\gamma, B_\gamma} (x) & = \big( 1  -  \lambda \gamma (3 - 2 \gamma) \big) x +  \lambda \gamma ( 1 -  \gamma ) \PR_{ U} x + k,
\end{align*}
where $$ k =  \lambda \gamma \big( (2 \gamma - 1)v + 4 (1 - \gamma ) w -2 ( 1 - \gamma ) \PR_{ U} w - 2 a \big).$$
\item \label{Ex.12}  We  have 
\begin{align*}
T_{B_\gamma, A_\gamma } (x) & = \big( 1 - \lambda \gamma (3 - 2 \gamma )\big) x + \lambda \gamma (1 - \gamma ) \PR_{ U} x + l,
\end{align*}
where 
\begin{align*}
 l & = \lambda \gamma \big( 2 (1 - \gamma ) a + v + 2 ( 1 - \gamma ) w \big).
\end{align*}
\item\label{Ex.13}  We have 
\begin{align*}
 R_{A_\gamma} T_{A_\gamma, B_\gamma} (x) &=  T_{B_\gamma, A_\gamma}  R_{A_\gamma}  (x)\\
	& = \big( 1 - \gamma  \big) \Big(  \big(  \lambda \gamma \big(  3 - 2 \gamma \big) -1 \big) x - \lambda \gamma \big(  1 - \gamma \big)  \PR_{U} x\Big) + h,
\end{align*}
where 
\begin{align*}
h & =  \gamma \big[  \lambda \gamma \big(2 \gamma - 3 \big) + 1 + \lambda \big] v + 2 \big(   1 - \gamma \big) \big[  \big(  1 - 2 \lambda \gamma  \big(1 - \gamma \big) \big)  w \\
& \ \ \ \ \ \ +   \lambda \gamma   \big(  1 - \gamma \big) \PR_{U}  w \big] + 2 \lambda \gamma \big(  1 - \gamma \big) a.
\end{align*}

\item\label{Ex.14} We have 
\begin{align*}
R_{B_\gamma} T_{A_\gamma, B_\gamma} & = \big(  1 - 2 \gamma \big) \Big(    \big(  \lambda  \gamma ( 3 - 2 \gamma ) - 1\big) x - \lambda \gamma ( 1 - \gamma ) \PR_{U} x \Big) + m,
\end{align*}
where 
\begin{align*}
m & = \big(  1 - 2 \gamma \big) \lambda \gamma \big[  \big( 1 - 2 \gamma  \big)  v + 2 \big(   1 - \gamma \big)  \PR_{U} w - 4 \big(  1 - \gamma \big) w \big] \\
& \ \ \ \ \ \ + 2 \big[  \big( 1 - \gamma   \big)  w + \gamma \big(  \lambda- 1 - 2 \lambda \gamma \big) a \big].
\end{align*}

\item \label{Ex.15} We have 
\begin{align*}
 T_{B_\gamma, A_\gamma} R_{B_\gamma} & =  \big(  1 - 2 \gamma \big) \Big(    \big(  \lambda  \gamma ( 3 - 2 \gamma ) - 1\big) x - \gamma \big(  1 + \lambda - \lambda \gamma \big( 5- 3 \gamma  \big)  \big) \PR_{U} x + s,
\end{align*}
where 
\begin{align*}
s & = \lambda \gamma v + 2 \Big(  \lambda \gamma^2 - \big( 2 \lambda + 1 \big) \gamma +1  \Big) w \\
& \ \ \ \ \ \ - 2 \gamma \Big(   \lambda \big(  3 \gamma + 4 \big) + 1\Big) a + 2 \lambda \gamma \big( 1 - \gamma  \big)^2 \PR_{U}w.
\end{align*}
\item\label{Ex.16} 
$R_{B_\gamma} T_{A_\gamma, B_\gamma} \neq T_{B_\gamma, A_\gamma} R_{B_\gamma} $.
\item\label{Ex.17}  We have 
\begin{align*}
R_{B_\gamma} T_{B_\gamma, A_\gamma} & = \big(  2 \gamma - 1 \big)  \Big(  1 - \lambda \gamma \big(  3 - 2 \gamma \big) \Big) x + \gamma \Big(   \lambda \gamma \big(  5 - 3 \gamma \big)  - \lambda -1 \Big)  \PR_{U} x  + b,
\end{align*}
where 
\begin{align*}
b & =- 2 \gamma \Big(   \lambda \gamma \big( 2 \gamma - 3 \big) + \lambda + 1 \Big)  a - 2 \Big(  \gamma \Big(   \lambda \gamma \big( 2 \gamma - 3  \big) + 1 \Big) - 1\Big) w + \lambda \gamma \big(   2 \gamma -1 \big) v \\
&  \ \ \ \ \  - 2 \lambda \gamma^2 \big( 1 - \gamma  \big) \PR_{U}  w.
\end{align*}
\item\label{Ex.18}  We have 
\begin{align*}
T_{A_\gamma, B_\gamma} R_{B_\gamma} & = \big(  2 \gamma - 1 \big)  \Big(  1 - \lambda \gamma \big(  3 - 2 \gamma \big) \Big) x + \gamma \Big(   \lambda \gamma \big(  5 - 3 \gamma \big)  - \lambda -1 \Big)  \PR_{U} x  + c,
\end{align*}
where 
\begin{align*}
c & =  - 2 \gamma \Big(   \lambda \gamma \big( 2 \gamma - 3 \big) + \lambda + 1 \Big)  a - 2 \Big(  \gamma \Big(   \lambda \gamma \big( 2 \gamma - 3  \big) + \lambda + 1 \Big) - 1\Big) w + \lambda \gamma \big(   2 \gamma -1 \big) v \\
&  \ \ \ \ \ \  - 2 \lambda \gamma^2 \big( 1 - \gamma  \big) \PR_{U}   w .
\end{align*}
\item\label{Ex.19}  $R_{B_\gamma} T_{B_\gamma, A_\gamma} \neq T_{A_\gamma, B_\gamma} R_{B_\gamma}$.
\end{enumerate}
\end{proposition}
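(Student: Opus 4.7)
The plan is to treat the whole proposition as a chain of routine affine-operator calculations anchored on the formulas already catalogued in \cref{rea1}. Since $A=\Id+v$ with $v\in U^\perp$ is an affine relation, \cref{Prop11} and \cref{Lem1} apply, as does \cref{Lem:rs3}. Moreover, the hypothesis $a\in U^\perp$ lets us simplify every occurrence of $\PR_{U^\perp}a$ to $a$ and every occurrence of $\PR_{U}a$ to $0$, so all formulas from \cref{rea1} specialize cleanly. In particular, items \ref{Ex.11} and \ref{Ex.12} are simply \cref{rea1}\ref{tab:1} and \cref{rea1}\ref{tab:2} rewritten under $\PR_{U^\perp}a = a$, with the trivial sign adjustment coming from the change $A=\Id-v$ to $A=\Id+v$ propagated through $J_A$ and $R_A$.

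For \ref{Ex.13}, I would invoke \cref{Lem:rs3} at $n=1$ to get the equality $R_{A_\gamma}T_{A_\gamma,B_\gamma}=T_{B_\gamma,A_\gamma}R_{A_\gamma}$ for free, and then compute only one of the two sides by composing $R_{A_\gamma}$ from \cref{rea1}\ref{eqq:4} with the affine form of $T_{A_\gamma,B_\gamma}$ from \ref{Ex.11}. Since both operators have the shape $\alpha\Id+\beta\PR_{U}+\text{constant}$, the composition collapses using $\PR_U^{2}=\PR_U$, $\PR_U v=0$, and $\PR_U a=0$, giving the expression stated in \ref{Ex.13}.

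The four computations \ref{Ex.14}, \ref{Ex.15}, \ref{Ex.17}, \ref{Ex.18} are of the same flavor: each is a composition of an affine operator ($R_{B_\gamma}$, $T_{A_\gamma,B_\gamma}$, or $T_{B_\gamma,A_\gamma}$) with another affine operator, all of which live in the span of $\Id$, $\PR_U$, and a constant vector built from $v,w,\PR_U w,a$. I would organize each calculation by writing the linear part as $\alpha\Id+\beta\PR_U$, composing term by term, and then collecting the constant part into the respective vectors $m$, $s$, $b$, $c$. The non-equalities \ref{Ex.16} and \ref{Ex.19} are then immediate: one inspects the constants obtained and checks that they disagree on a component that is nonzero for admissible parameters. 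For \ref{Ex.19} the simplest witness is the coefficient of $w$: in $b$ (item \ref{Ex.17}) it is $-2(\gamma(\lambda\gamma(2\gamma-3)+1)-1)$, whereas in $c$ (item \ref{Ex.18}) it is $-2(\gamma(\lambda\gamma(2\gamma-3)+\lambda+1)-1)$, a gap of exactly $2\lambda\gamma\neq 0$ for $\lambda,\gamma\in\,\left]0,1\right[$. A similar coefficient inspection on $\PR_U x$ handles \ref{Ex.16}.

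The main obstacle is purely bookkeeping: the affine forms involve six independent directions ($x$, $\PR_U x$, $v$, $w$, $\PR_U w$, $a$) and several free scalars ($\lambda$, $\gamma$), so sign errors and misplaced factors of $\gamma$ are the real danger. I would mitigate this by (i) computing $R_{B_\gamma}(\alpha\Id+\beta\PR_U+c)$ once as a template, using $R_{B_\gamma}=(2\gamma-1)\Id-\gamma\PR_U-2\gamma a+2(1-\gamma)w$ from \cref{rea1}\ref{eqq:2}, and (ii) doing the analogous template with $T_{A_\gamma,B_\gamma}$ and $T_{B_\gamma,A_\gamma}$ from \ref{Ex.11}--\ref{Ex.12}, so that each of the four needed compositions reduces to a single substitution into a pre-verified template.
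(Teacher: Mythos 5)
Your proposal is essentially correct and, for most items, coincides with the paper's own proof: items \ref{Ex.11} and \ref{Ex.12} are read off from \cref{rea1}\ref{tab:1}--\ref{tab:2}, and items \ref{Ex.14}, \ref{Ex.15}, \ref{Ex.17}, \ref{Ex.18} are exactly the template compositions of affine maps of the form $\alpha\Id+\beta\PR_U+\text{const}$ that you describe, with \ref{Ex.16} and \ref{Ex.19} obtained by comparing the resulting expressions. The one place where you genuinely diverge is \ref{Ex.13}: you invoke \cref{Lem:rs3} at $n=1$ to obtain the identity $R_{A_\gamma}T_{A_\gamma,B_\gamma}=T_{B_\gamma,A_\gamma}R_{A_\gamma}$ for free and then compute only one side, whereas the paper computes \emph{both} compositions explicitly and observes that they produce the same affine map. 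Your route is shorter and is legitimate here (since $A$ is an affine relation the theorem applies); what the paper's double computation buys is an independent concrete verification of the abstract commutation result, which is part of the point of the example. Two small cautions. First, you describe the passage from \cref{rea1} (where $A=\Id-v$) to the present proposition (where $A=\Id+v$) as a ``trivial sign adjustment propagated through $J_A$ and $R_A$''; if you actually propagate it, every occurrence of $v$ in the target formulas flips sign, and the formulas as stated in \ref{Ex.11}--\ref{Ex.18} would \emph{not} be recovered. The stated formulas are the verbatim specializations of \cref{rea1} with $\PR_{U^\perp}a=a$, i.e.\ they implicitly keep $A=\Id-v$; you should either flag this discrepancy or not perform the sign flip. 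Second, for \ref{Ex.16} the coefficients of $\PR_U x$ in \ref{Ex.14} and \ref{Ex.15} can coincide for isolated parameter values (e.g.\ $\lambda=1$, $\gamma=1/3$ solves $1=\lambda\gamma(4-3\gamma)$), so a fully rigorous argument should also compare the constant terms rather than rely on that single coefficient; the paper is equally terse on this point.
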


\begin{proof}
\ref{Ex.11}:  It follows from  \cref{rea1} \ref{tab:1}. 
\ref{Ex.12}	:  It follows from  \cref{rea1} \ref{tab:2}.
\ref{Ex.13}	:  By using \ref{Ex.11}, \ref{Ex.12}, and \cref{rea1}\ref{eqq:4} we have 
\begin{align*}
 R_{A_\gamma} T_{A_\gamma, B_\gamma} (x) &= \Big(  \gamma v - \big( 1 - \gamma  \big) \Id + 2 \big( 1 - \gamma  \big) w\Big) T_{A_\gamma, B_\gamma} (x) \\
 & = \gamma v + 2 \big( 1 - \gamma  \big) w - \big( 1 - \gamma \big) T_{A_\gamma, B_\gamma} (x) \\
 & = \gamma v - \big( 1 - \gamma  \big) \lambda \gamma \big(   2 \gamma -1\big) v + 2 \big( 1 - \gamma \big) w - 4 \lambda \gamma  \big( 1- \gamma  \big)^2 w \\
 &  \ \ \  - \big( 1 - \gamma  \big) \Big(  \Big(  1 - \lambda \gamma \big(  3 - 2 \gamma \big) \Big) x + \lambda \gamma \big(  1 - \gamma \big) \PR_{U} x  + \lambda \gamma \Big(  -2 \big(  1 - \gamma \big) \PR_{U}  w - 2 a\Big)\Big) \\
 & = \big( 1 - \gamma  \big) \Big(  \big(  \lambda \gamma \big(  3 - 2 \gamma \big) -1 \big) x - \lambda \gamma \big(  1 - \gamma \big)  \PR_{U} x\Big) \\
 &  \ \ \ \ \ \ + \gamma \big[  \lambda \gamma \big(2 \gamma - 3 \big) + 1 + \lambda \big] v + 2 \big(   1 - \gamma \big) \big[  \big(  1 - 2 \lambda \gamma  \big(1 - \gamma \big) \big)  w \\
& \ \ \ \ \ \ +   \lambda \gamma   \big(  1 - \gamma \big) \PR_{U}  w \big] + 2 \lambda \gamma \big(  1 - \gamma \big) a.
\end{align*}
Moreover, 
\begin{align*}
T_{B_\gamma, A_\gamma}  R_{A_\gamma}  (x) &=  \big(   1 - \lambda \gamma \big(  3 - 2 \gamma \big) \big) R_{A_\gamma} (x)  + \lambda \gamma \big(   1 - \gamma \big) \PR_{U}  R_{A_\gamma } (x)  \\
& \ \ \ \ \ \ +   \lambda \gamma   \big(  2 \big( 1 - \gamma \big) a + v+ 2 \big(  1 - \gamma \big) w \big) . \\
 & = \big( 1 - \gamma  \big) \Big(  \big(  \lambda \gamma \big(  3 - 2 \gamma \big) -1 \big) x - \lambda \gamma \big(  1 - \gamma \big)  \PR_{U} x\Big) \\
 &  \ \ \ \ \ \ + \gamma \big[  \lambda \gamma \big(2 \gamma - 3 \big) + 1 + \lambda \big] v + 2 \big(   1 - \gamma \big) \big[  \big(  1 - 2 \lambda \gamma  \big(1 - \gamma \big) \big)  w \\
& \ \ \ \ \ \ +   \lambda \gamma   \big(  1 - \gamma \big) \PR_{U}  w \big] + 2 \lambda \gamma \big(  1 - \gamma \big) a.
\end{align*}
Therefore, 
$$    R_{A_\gamma} T_{A_\gamma, B_\gamma} (x)= T_{B_\gamma, A_\gamma}  R_{A_\gamma}  (x) .$$
and \ref{Ex.13} is verified. \\
\ref{Ex.14}: By using \cref{rea1}\ref{eqq:2} and \ref{Ex.11}  we have 
\begin{align*}
R_{B_\gamma} T_{A_\gamma, B_\gamma} (x) & = \Big(   \big( 2 \gamma - 1\big) \Id - \gamma \PR_{U} - 2 \gamma a + 2 \big( 1 - \gamma \big) w\Big) T_{A_\gamma, B_\gamma} (x)\\
& = 2 \big( 1 - \gamma \big) w - 2 \gamma a - \big( 1  - 2 \gamma \big) T_{A_\gamma, B_\gamma} (x) - \gamma \PR_{U} T_{A_\gamma, B_\gamma} (x)   \\
& =  2 \big( 1 - \gamma \big) w - 2 \gamma a - \big( 1  - 2 \gamma \big) \Big[   \Big(  1- \lambda \gamma \big( 3-2 \gamma \big)  \Big) x + \lambda \gamma \big(  1 - \gamma  \big) \PR_{U} x + k\Big] \\
&= \big(  1 - 2 \gamma \big) \Big[   \Big(   \lambda \gamma \big( 3 - 2 \gamma  \big) - 1 \Big) x - \lambda \gamma \big( 1 - \gamma  \big) \PR_{U} x \Big] \\
 &  \ \ \ \ \ \ + \lambda \gamma  \big(  1 - 2 \gamma \big) \Big[   \big( 1 -2 \gamma \big) v + 2 \big( 1 - \gamma  \big) \PR_{U} w - 4 \big(  1 - \gamma \big) w \Big] \\
 &  \ \ \ \ \ \ + 2 \Big[  \big( 1 - \gamma  \big)  w + \gamma \big(  \lambda-1-2 \lambda \gamma \big) a \Big] .
\end{align*}

\ref{Ex.15}: By using \ref{Ex.12} and \cref{rea1}\ref{eqq:2}  we have
\begin{align*}
 T_{B_\gamma, A_\gamma} R_{B_\gamma} & = \Big(    \Big(  1 - \lambda \gamma \big(   3 - 2 \gamma \big)   \Id \Big)  + \lambda \gamma \big( 1 - \gamma  \big) \PR_{U} + l \Big) R_{B_\gamma } (x) \\
 & =  \Big(  1 - \lambda \gamma \big(   3 - 2 \gamma \big)  R_{B_\gamma } (x) +  \lambda \gamma \big( 1 - \gamma  \big) \PR_{U}   \big( R_{B_\gamma } (x)  \big) + l \big(  R_{B_\gamma } (x)  \big) \\
 & = \Big(  1 - \lambda \gamma \big(  3 - 2 \gamma \big) \Big) \big(  2 \gamma - 1\big) x - \gamma \Big(  1 + \lambda - \lambda \gamma \big( 5 - 3 \gamma  \big) \Big) \PR_{U} x \\
 &  \ \ \ \ \ \ + 2 \lambda \gamma  \big( 1 - \gamma \big)  \big( a + w \big) + \Big(  1 - \lambda \gamma \big(  3 - 2 \gamma  \big) \Big)  \Big(  2 \big(  1 - \gamma \big) w - 2 \gamma a \Big) \\
 &  \ \ \ \ \ \  + 2 \lambda \gamma \big(  1 - \gamma \big)^2 \PR_{U} w + \lambda \gamma v \\
 & =  \big(  1 - 2 \gamma \big) \Big(    \big(  \lambda  \gamma ( 3 - 2 \gamma ) - 1\big) x - \gamma \big(  1 + \lambda - \lambda \gamma \big( 5- 3 \gamma  \big)  \big) \PR_{U} x \\
  &  \ \ \ \ \ \  + \lambda \gamma v + 2 \Big(  \lambda \gamma^2 - \big( 2 \lambda + 1 \big) \gamma +1  \Big) w \\
& \ \ \ \ \ \ - 2 \gamma \Big(   \lambda \big(  3 \gamma + 4 \big) + 1\Big) a + 2 \lambda \gamma \big( 1 - \gamma  \big)^2 \PR_{U}w.
\end{align*}
\ref{Ex.16}: It follows from \ref{Ex.14} and \ref{Ex.15}. \\
\ref{Ex.17}:  By using \cref{rea1}\ref{eqq:2} and \ref{Ex.12} we have
\begin{align*}   
R_{B_\gamma } T_{B_\gamma, A_\gamma } (x) & = \Big(   \big(  2 \gamma -1 \big) \Id - \gamma \PR_{U} -2 \gamma a + 2 \big(  1- \gamma \big) w \Big) T_{B_\gamma, A_\gamma } (x)\\
& =  \big(  2 \gamma -1 \big) T_{B_\gamma, A_\gamma } (x) - \gamma \PR_{U} \big(  T_{B_\gamma, A_\gamma } (x) \big) -2 \gamma a + 2 \big( 1 - \gamma \big) w \\
& = \big(  2 \gamma -1 \big) \Big[    \Big(   1 - \lambda \gamma \big( 3 - 2 \gamma  \big)  \Big) x + \lambda \gamma \big( 1 - \gamma  \big) \PR_{U} x + l \Big] \\
& \ \ \ \ \ \  - \gamma \PR_{U} \Big[    \Big(   1 - \lambda \gamma \big( 3 - 2 \gamma  \big)  \Big) x + \lambda \gamma \big( 1 - \gamma  \big) \PR_{U} x + l \Big]  - 2 \gamma a + 2 \big(  1- \gamma \big) w \\
&= \big(  2 \gamma -1 \big) \Big[  \Big(   1- \lambda \gamma \big(  3 - 2 \gamma \big)\Big) x + \lambda \gamma \big( 1- \gamma  \big) \PR_{U} x  \Big] \\
& \ \ \ \ \ \  - \gamma \Big[  \Big(  1 -  \lambda \gamma \big(  3 - 2 \gamma  \big)\Big)  \PR_{U}  x + \lambda \gamma \big(  1- \gamma \big) \PR_{U}  x \Big] \\
& \ \ \ \ \ \  + \big( 2 \gamma -1  \big) l - \gamma \PR_{U} l - 2 \gamma a + 2 \big(  1 - \gamma \big) w \\
& = \big(  2 \gamma - 1 \big)  \Big(  1 - \lambda \gamma \big(  3 - 2 \gamma \big) \Big) x + \gamma \Big(   \lambda \gamma \big(  5 - 3 \gamma \big)  - \lambda -1 \Big)  \PR_{U} x  \\
&  \ \ \ \ \ \  - 2 \gamma \Big(   \lambda \gamma \big( 2 \gamma - 3 \big) + \lambda + 1 \Big)  a - 2 \Big(  \gamma \Big(   \lambda \gamma \big( 2 \gamma - 3  \big) + 1 \Big) - 1\Big) w + \lambda \gamma \big(   2 \gamma -1 \big) v \\
&  \ \ \ \ \ \  - 2 \lambda \gamma^2 \big( 1 - \gamma  \big) \PR_{U}  w.
\end{align*}
\ref{Ex.18}:   By using \cref{rea1}\ref{eqq:2} and \ref{Ex.12} we have
\begin{align*} 
T_{A_\gamma, B_\gamma } R_{B_\gamma } (x) & = \Big(  1 - \lambda \gamma \big(   3 - 2 \gamma \big)  \Id + \lambda \gamma  \big(  1 - \gamma \big) \PR_{U} + k \Big)  R_{B_\gamma } (x) \\
& = \Big( 1 - \lambda \gamma \big(  3 - 2 \gamma \big) \Big) R_{B_\gamma } (x) + \lambda \gamma \big(  1 - \gamma \big) \PR_{U} R_{B_\gamma } (x) + k \\
& = \Big(  1 - \lambda \gamma \big( 3 - 2 \gamma  \big) \Big) \Big(   \big(  2 \gamma -1  \big) x - \gamma \PR_{U} x \Big) +  \Big(  1 - \lambda \gamma \big( 3 - 2 \gamma  \big) \Big) \Big(  2 \big(  1 - \gamma \big) w - 2 \gamma a \Big) \\
&  \ \ \ \ \ \ + \lambda \gamma \big( 1 - \gamma \big) \PR_{U} \Big(  \big( 2 \gamma -1  \big)  x - \gamma \PR_{U} x\Big) + \lambda \gamma \big( 1 - \gamma \big) \PR_{U} \Big( 2 \big(  1- \gamma \big)  w - 2 \gamma a \Big) + k \\
& = \big(  2 \gamma - 1 \big)  \Big(  1 - \lambda \gamma \big(  3 - 2 \gamma \big) \Big) x + \gamma \Big(   \lambda \gamma \big(  5 - 3 \gamma \big)  - \lambda -1 \Big)  \PR_{U} x  \\
&  \ \ \ \ \ \  - 2 \gamma \Big(   \lambda \gamma \big( 2 \gamma - 3 \big) + \lambda + 1 \Big)  a - 2 \Big(  \gamma \Big(   \lambda \gamma \big( 2 \gamma - 3  \big) + \lambda + 1 \Big) - 1\Big) w  \\
&  \ \ \ \ \ \  + \lambda \gamma \big(   2 \gamma -1 \big) v- 2 \lambda \gamma^2 \big( 1 - \gamma  \big) \PR_{U}   w .
\end{align*}
\ref{Ex.19}: It follows from \ref{Ex.17} and \ref{Ex.18}. 
\end{proof}



\end{document}